\theoremstyle{definition}
\newtheorem{thm}{Theorem}[section]
\newtheorem{defi}[thm]{Definition}
\newtheorem{lemm}[thm]{Lemma}
\newtheorem{prop}[thm]{Proposition}
\newtheorem{cor}[thm]{Corollary}
\newtheorem{rem}[thm]{Remark}
\numberwithin{equation}{section}
\DeclareMathOperator{\Mod}{Mod }
\DeclareMathOperator{\im}{Im}
\DeclareMathOperator{\diam}{diam}
\DeclareMathOperator{\dist}{dist}
\DeclareMathOperator{\loc}{loc}
\newcommand{\mres}{\mathbin{\vrule height 1.6ex depth 0pt width
0.13ex\vrule height 0.13ex depth 0pt width 1.3ex}}
\DeclareMathOperator{\sgn}{sgn}
\begin{document}

\title{Quasiconformal mappings on the Grushin plane
}


\author{Chris Gartland}
\address{Department of Mathematics, University of Illinois at Urbana-Champaign, 1409 West Green St., Urbana, IL 61801} 
\email{cgartla2@illinois.edu}

\author{Derek Jung}
\email{jung2@illinois.edu}

\author{Matthew Romney} 
\email{romney2@illinois.edu} 

\date{8 April 2016}
\subjclass[2010]{30L05, 53C17}
\keywords{Grushin plane, quasiconformal mapping, sub-Riemannian geometry, conformal modulus}

\maketitle

\begin{abstract}
We prove that a self-homeomorphism of the Grushin plane is quasisymmetric if and only if it is metrically quasiconformal and if and only if it is geometrically quasiconformal. As the main step in our argument, we show that a quasisymmetric parametrization of the Grushin plane by the Euclidean plane must also be geometrically quasiconformal. We also discuss some aspects of the Euclidean theory of quasiconformal maps, such as absolute continuity on almost every compact curve, not satisfied in the Grushin case.
\end{abstract}

\section{Introduction}

For a given $\alpha \in \mathbb{N}$, the vector fields $X_1 = \frac{\partial}{\partial x_1}$ and $X_2 = |x_1|^\alpha \frac{\partial}{\partial x_2}$ determine a sub-Riemannian (Carnot-Carath\'eodory) metric on $\mathbb{R}^2$. This metric can be written more explicitly as  
\begin{equation} \label{equ:grushin_distance}
d_\alpha(x,y) = \inf_\gamma \int_0^1 \sqrt{x_1'(t)^2 + \frac{x_2'(t)^2}{|x_1(t)|^{2\alpha}}}\,dt,
\end{equation}
where the infimum is taken over all curves $\gamma: [0,1] \rightarrow \mathbb{R}^2$ with $\gamma(0) = x$ and $\gamma(1) = y$ that are absolutely continuous with respect to the Euclidean metric. We denote this metric by $d_\alpha$, and we define $\mathbb{G}_\alpha^2$ (the {\it $\alpha$-Grushin plane}) to be $\mathbb{R}^2$ equipped with the metric $d_\alpha$. This definition can be extended to all $\alpha \geq 0$ by taking (\ref{equ:grushin_distance}) as the definition for $d_\alpha$. We further obtain a metric measure space by equipping $\mathbb{G}_\alpha^2$ with the Hausdorff 2-measure generated by $d_\alpha$, which we denote by $\mathcal{H}_\alpha^2$. The $\alpha$-Grushin plane is a Riemannian manifold except on the {\it singular line} $Y = \{0\}\times \mathbb{R}$. If $\alpha$ is an integer then it is also a sub-Riemannian manifold. Background on the Grushin plane may be found in Bell\"aiche \cite{Bellaiche} and Monti and Morbidelli \cite{MonMor1}.

While initially considered only in Euclidean space, the theory of quasiconformal maps has been extended to various metric space settings. Of particular mention is the work of Heinonen, Koskela, Shanmugalingam and Tyson \cite{HKST:01}, which shows the equivalence of the metric, geometric, and analytic definitions in the setting of Ahlfors regular metric measure spaces supporting a Poincar\'e inequality. Previously, quasiconformal maps on the Heisenberg group, a fundamental example of a sub-Riemannian manifold, had been studied by Kor\'anyi and Reimann \cite{KorRei:95}, who proved the equivalence of appropriate metric, analytic, and geometric definitions. Results for the more general class of equiregular sub-Riemannian manifolds were subsequently obtained by Margulis and Mostow \cite{MarMos:95}. 

An investigation of quasiconformal maps in the Grushin plane was initiated by Ackermann in \cite{Ack}. In this paper we complete her goal of showing that all definitions of quasiconformality, when appropriately understood, are equivalent.  Our focus is on the geometric definition of quasiconformality, which was not considered in her paper.
We emphasize that the equivalence of definitions in the case of the Grushin plane is not covered by the existing literature; if $\alpha \geq 1$, the measure $\mathcal{H}_\alpha^2$ is not Ahlfors regular or even locally finite on the singular line, and as a sub-Riemannian manifold the Grushin plane is not equiregular. On the other hand, the Grushin plane is nicely behaved in another respect: by a result of Meyerson \cite{Meyerson}, the Grushin plane $\mathbb{G}_\alpha^2$ is quasisymmetrically equivalent to the Euclidean plane. A particular quasisymmetry is given by $$\varphi(x_1,x_2) = ((1+\alpha)^{-1}|x_1|^\alpha x_1, x_2),$$
which we will call the \emph{canonical quasisymmetry}. The map used by Meyerson did not include the constant $(1+\alpha)^{-1}$, though we have added it so that $\varphi$ is a conformal map between Riemannian manifolds outside the singular line. 

The main step in our proof (Proposition \ref{prop:varphi_gqc}) is to show that the quasisymmetry $\varphi$ is also geometrically quasiconformal. We conclude from this that any quasisymmetric parametrization of $\mathbb{G}_\alpha^2$ by the Euclidean plane must also be geometrically quasiconformal. This aspect of our work connects it with a fairly large body of recent literature on the \textit{quasisymmetric} (or \textit{quasiconformal}) \textit{uniformization problem}. This asks under what conditions a metric space admits a quasisymmetric (or quasiconformal) parametrization by a well-known model space, and how different classes of parametrizations relate to each other. As a notable recent example, see Rajala \cite{Raj16} for a characterization of metric spaces with locally finite Hausdorff 2-measure admitting a geometrically quasiconformal parametrization by the Euclidean plane. A nearly-universal assumption in the literature is that the measure on the space in consideration be locally finite, which as mentioned is not satisfied by the Grushin plane $\mathbb{G}_\alpha^2$ for $\alpha \geq 1$.  




Our main result is the following set of equivalences, which combines our work with that of Ackermann \cite{Ack}. For a homeomorphism $f: \mathbb{G}_\alpha^2 \rightarrow \mathbb{G}_\alpha^2$, we call $Y \cup f^{-1}(Y)$ the \emph{singular set} of $f$ and its complement the \emph{Riemannian set} of $f$. 

\begin{thm} \label{thm:main_theorem}
Let $f: \mathbb{G}_\alpha^2 \rightarrow \mathbb{G}_\alpha^2$  be a homeomorphism. The following are equivalent.
\begin{itemize}
\item[(a)] $f$ is quasisymmetric.
\item[(b)] $f$ is metrically quasiconformal.
\item[(c)] $f$ is metrically quasiconformal on its Riemannian set. 
\item[(d)] $f$ is geometrically quasiconformal.
\item[(e)] $f$ is uniformly locally geometrically quasiconformal on its Riemannian set. That is, there exists a $K\geq 1$ such that, for any point $x$ in the Riemannian set of $f$, there exists a neighborhood $U_x$ such that $f|U_x: U_x \rightarrow \mathbb{G}_\alpha^2$ is geometrically quasiconformal with constant $K$. 
\item[(f)] $\varphi \circ f$ satisfies the absolute continuity on lines (ACL) property in its Riemannian set and satisfies the {\it conjugated Beltrami equation}
$$\left(\frac{\partial}{\partial x_1} + i |x_1|^\alpha \frac{\partial}{\partial x_2} \right)(\varphi \circ f)(x) = \mu(x) \left(\frac{\partial}{\partial x_1} - i |x_1|^\alpha \frac{\partial}{\partial x_2} \right)(\varphi \circ f)(x)$$
for a.e. $x = (x_1,x_2)$ in its Riemannian set, for some measurable function $\mu: \mathbb{G}_\alpha^2\setminus (Y \cup f^{-1}(Y)) \rightarrow \mathbb{C}$ with $\|\mu\|_\infty < 1$. Here we consider $\varphi \circ f$ as a function into $\mathbb{C}$.   
\end{itemize}
This result is quantitative in the sense that worst-case data for one definition can be computed from the data of any other definition. See Remark \ref{rem:quantitative} for details.
\end{thm}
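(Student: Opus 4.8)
The plan is to establish the cycle of implications (a) $\Rightarrow$ (b) $\Rightarrow$ (c) $\Rightarrow$ (e) $\Rightarrow$ (d) $\Rightarrow$ (a), treating (f) separately as an analytic reformulation equivalent to (c) and (e) on the Riemannian set. The implications among the local/metric conditions are expected to be comparatively soft: (a) $\Rightarrow$ (b) follows from the standard fact that quasisymmetry forces a uniform bound on the metric distortion quotient $H_f(x) = \limsup_{r\to 0} \frac{\sup_{d_\alpha(x,y)=r} d_\alpha(f(x),f(y))}{\inf_{d_\alpha(x,y)=r} d_\alpha(f(x),f(y))}$, and (b) $\Rightarrow$ (c) is immediate by restriction. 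The implication (c) $\Rightarrow$ (e) is where the quasisymmetry of the canonical map $\varphi$ does the real work: on the Riemannian set, $\varphi$ is a conformal diffeomorphism onto an open subset of the Euclidean plane, so conjugating $f$ by $\varphi$ transports the problem to the classical Euclidean setting, where metric quasiconformality of a homeomorphism between planar domains upgrades to local geometric quasiconformality with a uniform constant by the measurable Riemann mapping theorem and the Euclidean equivalence of definitions.

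The heart of the argument, and the step I expect to be the main obstacle, is the passage (e) $\Rightarrow$ (d): from \emph{uniform local} geometric quasiconformality on the Riemannian set to \emph{global} geometric quasiconformality. The difficulty is entirely concentrated on the singular line $Y \cup f^{-1}(Y)$, where $\mathcal{H}_\alpha^2$ is not even locally finite, so none of the standard curve-family or modulus arguments that patch local estimates into a global one are available off the shelf. My approach here would be to invoke Proposition \ref{prop:varphi_gqc}, which asserts that the canonical quasisymmetry $\varphi$ is itself geometrically quasiconformal despite the degeneracy of the measure. Concretely, I would factor any modulus computation for $f$ through the commutative scheme $\varphi \circ f \circ \varphi^{-1}$, so that conformal modulus of a curve family $\Gamma$ in $\mathbb{G}_\alpha^2$ is compared with its Euclidean counterpart under $\varphi$. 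The key technical point will be to show that the singular set is \emph{negligible for conformal modulus}: that is, families of curves spending positive ``length'' on $Y \cup f^{-1}(Y)$ carry zero modulus, so that the global modulus is determined by the behavior on the Riemannian set, where the uniform local estimate of (e) applies. This reduction is exactly the delicate part, since one cannot simply discard a measure-zero set when the measure blows up rather than vanishes there.

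Granting that the singular set is modulus-negligible, the remaining work is assembly. For (d) $\Rightarrow$ (a) I would use the now-standard implication that geometric quasiconformality, together with the metric structure of $\mathbb{G}_\alpha^2$ (which, via $\varphi$, inherits enough of the Euclidean Loewner/Poincar\'e behavior away from $Y$), forces quasisymmetry; here the global control supplied by (d) is what promotes local distortion bounds to the global comparison function required by quasisymmetry. The analytic condition (f) I would handle by observing that, on the Riemannian set, the conjugated operator $\partial/\partial x_1 + i|x_1|^\alpha \partial/\partial x_2$ is precisely the Cauchy--Riemann operator pulled back by $\varphi$, so that the conjugated Beltrami equation for $\varphi \circ f$ with $\|\mu\|_\infty < 1$ is equivalent to $\varphi \circ f \circ \varphi^{-1}$ being a classical Euclidean quasiregular map. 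Combined with the ACL property, this is the analytic characterization of quasiconformality in the plane, and it slots into the cycle between (e) and (c).

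Finally, the quantitative claim is obtained by tracking constants through each implication rather than proving it separately: the quasisymmetric control function, the metric distortion constant $H_f$, the geometric dilatation $K$, and the Beltrami bound $\|\mu\|_\infty$ are each estimated explicitly in terms of the input data at every arrow of the cycle, and since the distortion of $\varphi$ is a fixed constant depending only on $\alpha$, composing these estimates around the loop yields worst-case data for any one definition in terms of any other, as asserted in Remark \ref{rem:quantitative}.
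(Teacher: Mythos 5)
Your skeleton (conjugation by $\varphi$, Proposition \ref{prop:varphi_gqc} as the crucial input, Euclidean equivalence of definitions) is the right one, but the two arrows you single out as the heart of your cycle both have genuine gaps. First, for (e) $\Longrightarrow$ (d): your ``key technical point'' --- that curve families spending positive length on $Y \cup f^{-1}(Y)$ carry zero modulus --- is both misdirected and insufficient. Misdirected, because by Proposition \ref{prop:hausdorff_intersection} no rectifiable curve meets $Y$ in a set of positive $\mathcal{H}_\alpha^1$-measure at all, so among locally rectifiable curves the family you propose to neglect is empty; the genuine difficulty buried in Proposition \ref{prop:varphi_gqc} is the opposite phenomenon: Euclidean rectifiable curves $\widetilde{\gamma}$ whose preimages $\varphi^{-1}\circ \widetilde{\gamma}$ are \emph{non-rectifiable} in the Grushin metric, so that admissibility of $\rho$ for the Grushin family says nothing about them (this is why the paper needs the Riemann mapping theorem, the identity theorem, and the circular-detour construction of Lemma \ref{lemm:path_redefine}). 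Insufficient, because even granting any negligibility statement, uniform \emph{local} geometric quasiconformality does not patch into \emph{global} geometric quasiconformality by decomposing a ring family: modulus of a large ring family is not assembled from local modulus estimates. Even in the purely Euclidean plane this local-to-global passage is a theorem whose proof routes through the metric or analytic definition, and across a line one needs a removability theorem (\cite[Theorem 35.1]{Vais1}: sets of $\sigma$-finite linear measure are negligible). The paper accordingly never proves (e) $\Longrightarrow$ (d) directly; it closes the loop as (e) $\Longrightarrow$ (c) $\Longrightarrow$ (a) $\Longrightarrow$ (d).

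Second, for (d) $\Longrightarrow$ (a): you invoke Loewner/Poincar\'e machinery to upgrade geometric quasiconformality to quasisymmetry, but the Heinonen--Koskela theory you are implicitly appealing to requires Ahlfors $2$-regularity (indeed local finiteness of the measure), and this fails for $\mathbb{G}_\alpha^2$ when $\alpha \geq 1$: $\mathcal{H}_\alpha^2$ is not locally finite on $Y$. This is precisely the obstruction the paper highlights as placing the Grushin plane outside the existing literature, so that step does not go through off the shelf. The repair is cheap once Proposition \ref{prop:varphi_gqc} is in hand: since $\varphi$ is geometrically conformal for ring families, hypothesis (d) transfers to the statement that $\widetilde{f} = \varphi \circ f \circ \varphi^{-1}$ is geometrically $K$-quasiconformal on $\mathbb{R}^2$, hence $\eta$-quasisymmetric by the Euclidean theory (this is exactly the quantitative route of Remark \ref{rem:quantitative}(3)), and then $f = \varphi^{-1}\circ \widetilde{f} \circ \varphi$ is quasisymmetric. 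Likewise your (e) $\Longrightarrow$ (d) should be closed through the chain (e) $\Longrightarrow$ (c) (Euclidean local equivalence off the singular set) $\Longrightarrow$ (a) (Proposition \ref{thm:qs_equivalence}, whose proof is the V\"ais\"al\"a removability argument) $\Longrightarrow$ (d) (Proposition \ref{prop:d_implies_a}). With those two repairs your cycle closes; the remaining arrows you describe --- (a) $\Longrightarrow$ (b) $\Longrightarrow$ (c), (c) $\Longrightarrow$ (e), the interpretation of (f) as the pulled-back Beltrami equation, and the bookkeeping of constants --- are sound and agree with the paper.
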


Implicit in the statement of conditions (d) and (e) is a choice of exponent and a choice of measure: we consider here the exponent $p = 2$ and the measure $\mathcal{H}_\alpha^2$, the Hausdorff 2-measure generated by $d_\alpha$. Also, note that we use the so-called {\it ring definition} of geometric quasiconformality, which differs from other potential definitions. Precise definitions will be given in Section \ref{sec:definitions}.

The implication (a) $\Longrightarrow$ (b) $\Longrightarrow$ (c) is immediate, as is (d) $\Longrightarrow$ (e). The implication (a) $\Longleftrightarrow$ (f) was worked out in Ackermann \cite{Ack}. The implications (a) $\Longrightarrow$ (e) and (c) $\Longleftrightarrow$ (e) are immediate from the theory for Euclidean space and the fact that $\varphi$ is a conformal map between Riemannian manifolds outside the singular line. What remains to be shown are the two implications (c) $\Longrightarrow$ (a) and (a) $\Longrightarrow$ (d). The first of these is short and is given as Proposition \ref{thm:qs_equivalence}. The second of these is more difficult and will be worked out in Section \ref{sec:gqc}. It requires an auxiliary proposition (Proposition \ref{prop:hausdorff_intersection}) that a rectifiable curve cannot intersect the singular line in a set of positive Grushin Hausdorff 1-measure. This is used to prove that the quasisymmetry $\varphi$ is also geometrically conformal (Proposition \ref{prop:varphi_gqc}), after which the proof that (a) $\Longrightarrow$ (d) is immediate.  


As an additional remark, we point out that an analogue of Theorem \ref{thm:main_theorem} can be stated for proper subdomains in $\mathbb{G}_\alpha^2$ and $\mathbb{R}^2$, where local quasisymmetry takes the place of quasisymmetry, though for the sake of brevity we do not pursue this here. See Heinonen \cite[Theorem 11.12]{hei:lectures} or \cite[Theorem 9.8]{HKST:01} for an example of such a local result. Similarly, an analogous result for homeomorphisms $f: \mathbb{R}^2 \rightarrow \mathbb{G}_\alpha^2$ and $f: \mathbb{G}_\alpha^2 \rightarrow \mathbb{R}^2$ can be given. 
Furthermore, Theorem \ref{thm:main_theorem} can be established more easily in the case that $\alpha<1$, in which it is known that the $\alpha$-Grushin plane is bi-Lipschitz equivalent to $\mathbb{R}^2$ by work of Romney and Vellis \cite{RV} and Wu \cite{Wu:grushin}. 

The final sections of this paper contain a number of auxiliary results. In Section \ref{sec:conformal_maps} we give a description of conformal self-mappings of the Grushin plane. In Section \ref{sec:exm} we discuss an example of an interesting curve family to illustrate some differences with the Euclidean theory (Theorem \ref{thm:nonrectifiable_family} and Corollary \ref{cor:failure}).

\section{Definitions of quasiconformality} \label{sec:definitions}

We give a review of the three main definitions of quasiconformality, as well as the related notion of quasisymmetry.  First is geometric quasiconformality, which applies to any two metric measure spaces $(X, d_X, \mu)$ and $(W, d_W, \nu)$.  For a fixed $p \geq 1$, define the $p$-modulus of a family $\Gamma$ of curves in $X$ as
$$\Mod_p \Gamma := \inf \int_X \rho^p d\mu,$$
where the infimum is taken over all Borel measurable functions $\rho: X \rightarrow [0,\infty]$ with the property that $\int_\gamma \rho\, ds \geq 1$ for all locally rectifiable curves $\gamma \in \Gamma$. Such a function $\rho$ is called \emph{admissible}, and the value $p$ is called the \emph{exponent}. For disjoint continua $E, F \subset X$, we let $\Gamma(E,F)$ denote the family of curves with one endpoint in $E$ and another endpoint in $F$. Any curve in $\Gamma(E,F)$ is defined on a closed interval, and so local rectifiability coincides with rectifiability in this case.
\begin{defi} \label{defi:geometric_qc}
A homeomorphism $f: (X, d_X, \mu) \rightarrow (W, d_W, \nu)$ between metric measure spaces is \emph{geometrically quasiconformal with exponent $p$} if there exists a constant $K$ such that 
\begin{equation} \label{equ:geometric_qc}
K^{-1}\Mod_p \Gamma(E,F) \leq \Mod_p f\Gamma(E,F) \leq K \Mod_p \Gamma(E,F)
\end{equation} 
for all disjoint continua $E, F \subset X$. 
\end{defi}
In all cases in this paper we will take $p=2$ as the exponent, and we write $\Mod \Gamma$ in place of $\Mod_2 \Gamma$. We will always equip $\mathbb{G}_\alpha^2$ with the Grushin Hausdorff 2-measure $\mathcal{H}_\alpha^2$. Since the vector fields $X_1 = \frac{\partial}{\partial x_1}$ and $X_2 = |x_1|^\alpha \frac{\partial}{\partial x_2}$ form an orthonormal basis for the tangent space at each point on the Riemannian part of $\mathbb{G}_\alpha^2$, we obtain the relationship $d\mathcal{H}_\alpha^2 \mres (\mathbb{G}_\alpha^2 \setminus Y) = |x_1|^{-\alpha}dx_1dx_2$. On the other hand, using the dilation property (\ref{equ:grushin_dilation}) on the singular line, we have that $\mathcal{H}_\alpha^2\mres Y = C\mathcal{H}_E^{2/(1+\alpha)}$ for some constant $C=C(\alpha)$. For the Euclidean plane $\mathbb{R}^2$, we always use the Lebesgue 2-measure $dm$, which is equal to the Euclidean Hausdorff 2-measure $\mathcal{H}_E^2$ (with appropriate scaling). 

In the definition of geometric quasiconformality, it is common to require that (\ref{equ:geometric_qc}) hold for all curve families $\Gamma$, not just those curve families corresponding to some disjoint continua $E,F$. These two definitions are equivalent in the Euclidean case (with the same constant $K$; see \cite[Corollary 36.2]{Vais1}), though, as we will see, this is not the case for the Grushin plane. See Section \ref{sec:exm} for discussion on this matter.

Next, we consider together metric quasiconformality and quasisymmetry. For these definitions we no longer require that the metric spaces carry a measure. 

\begin{defi}
A homeomorphism $f: (X, d_X) \rightarrow (W, d_W)$ is \emph{metrically quasiconformal} if there exists $H \geq 1$ such that, for all $x \in X$,
$$\limsup_{r \rightarrow 0} \frac{\sup\{d_W(f(x),f(y)): y \in X, d_X(x,y) \leq r\}}{\inf\{d_W(f(x),f(y)): y \in X, d_X(x,y) \geq r\}} \leq H.$$ 
\end{defi}

\begin{defi}
A homeomorphism $f: (X, d_X) \rightarrow (W, d_W)$ is \emph{quasisymmetric} if there exists a homeomorphism $\eta: [0, \infty) \rightarrow [0, \infty)$ such that for all triples of distinct points $x,y,z \in X$,
$$\frac{d_W(f(x),f(y))}{d_W(f(x),f(z))} \leq \eta\left(\frac{d_X(x,y)}{d_X(x,z)} \right) .$$
\end{defi}
It is immediate from these definitions that any quasisymmetric map is also metrically quasiconformal with $H = \eta(1)$. Quasisymmetry is a strong condition requiring control on the relative distortion of triples of points at all scales; the definition of metric quasiconformality requires that a similar condition hold only infinitesimally. 


We conclude now with analytic quasiconformality. This definition as stated applies only to homeomorphisms of $\mathbb{R}^n$, although more general metric definitions exist. See for example \cite[Section 9]{HKST:01}. 

\begin{defi}
A homeomorphism $f: \mathbb{R}^n \rightarrow \mathbb{R}^n$ (where $n \geq 2$) is \emph{analytically quasiconformal} if $f$ is in the Sobolev space $W_{\loc}^{1,n}(\mathbb{R}^n, \mathbb{R}^n)$, 
and there exists $K \geq 1$ such that
\begin{equation} \label{equ:analytic_qc} 
|Df(x)|^n \leq KJ_f(x)
\end{equation}
for a.e. $x \in \mathbb{R}^n$. Here $J_f$ is the Jacobian of $f$, and $|Df|$ is the operator norm of $Df$. 
\end{defi}

If $n=2$, after identifying $\mathbb{R}^2$ with $\mathbb{C}$, the condition (\ref{equ:analytic_qc}) is equivalent to satisfying the Beltrami equation: there exists  a measurable function $\mu: \mathbb{C} \rightarrow \mathbb{C}$, $\|\mu\|_\infty \leq (K-1)/(K+1) < 1$, such that $\frac{\partial f}{\partial \overline{z}} = \mu \frac{\partial f}{\partial z}$ a.e. 
Also, the requirement that $f \in W_{\loc}^{1,n}(\mathbb{R}^n, \mathbb{R}^n)$ is equivalent to the so-called ACL$^n$ property (see for instance Rickman \cite[Theorem 1.2]{Ric:93}). The map $f$ is {\it absolutely continuous on lines} (ACL) if for each closed $n$-interval $Q = [a_1, b_1] \times \cdots \times [a_n,b_n]$, $f$ is absolutely continuous on $n$-a.e. line segment in $Q$ parallel to a coordinate axis (that is, the family of those line segments where $f$ fails to be absolutely continuous has $n$-modulus zero). Then $f$ is said to be ACL$^n$ if additionally the partial derivatives (which must exist almost everywhere) are locally $n$-integrable.

Informally, we will refer to the constants $H$ and $K$ and the function $\eta$ above as the \emph{data} corresponding to a given definition. 

In the classical setting of homeomorphisms of $\mathbb{R}^n$, all four definitions are equivalent (taking $p = n$ in the geometric definition). It is quantitative in the sense that worst-case data corresponding to one definition can be computed from the data of the other. This equivalence is a deep result that was worked out by several mathematicians---principally Ahlfors, Gehring, and V\"ais\"al\"a---over many years. A detailed overview of this is given in the book of V\"ais\"al\"a \cite{Vais1}.  

As mentioned above, the proof of Theorem \ref{thm:main_theorem} can be accomplished using the quasisymmetric map $\varphi$ together with the equivalence of definitions in the Euclidean case. We can prove right away the implication (c) $\Longrightarrow$ (a). 

\begin{prop}\label{thm:qs_equivalence}
If a homeomorphism $f: \mathbb{G}_\alpha^2 \rightarrow \mathbb{G}_\alpha^2$ is metrically quasiconformal on its Riemannian set, then it is quasisymmetric. 
\end{prop}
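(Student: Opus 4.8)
The plan is to transfer the problem to the Euclidean plane via the canonical quasisymmetry $\varphi$ and then invoke the classical theory of planar quasiconformal maps. Since a composition of quasisymmetries is again quasisymmetric, and both $\varphi$ and $\varphi^{-1}$ are quasisymmetric, it suffices to prove that the conjugated homeomorphism $F := \varphi \circ f \circ \varphi^{-1} : \mathbb{R}^2 \to \mathbb{R}^2$ is quasisymmetric; then $f = \varphi^{-1} \circ F \circ \varphi$ follows. Because $\varphi$ restricts to a conformal map between Riemannian manifolds off the singular line $Y$, and $f$ is metrically quasiconformal with some constant $H$ on its Riemannian set $\mathbb{G}_\alpha^2 \setminus (Y \cup f^{-1}(Y))$, the map $F$ is metrically quasiconformal with constant $H$ on $\mathbb{R}^2 \setminus (L \cup F^{-1}(L))$, where $L := \varphi(Y) = \{0\}\times\mathbb{R}$ is the $x_2$-axis and $F^{-1}(L) = \varphi(f^{-1}(Y))$. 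By the Euclidean equivalence of the metric and analytic definitions, $F$ is $K$-quasiconformal on the open set $\Omega := \mathbb{R}^2 \setminus (L \cup F^{-1}(L))$ for some $K = K(H)$.

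First I would promote this to show that $F$ is $K$-quasiconformal on the larger domain $V := \mathbb{R}^2 \setminus F^{-1}(L)$. Here the exceptional set is $L \cap V$, a relatively closed subset of $V$ lying in the vertical line $L$ and hence of zero area. Since $F$ is a homeomorphism that is quasiconformal on $V \setminus L = \Omega$ and continuous across $L \cap V$, a standard gluing argument shows $F$ is $\mathrm{ACL}^2$ on $V$: in any rectangle contained in $V$, almost every horizontal segment meets $L$ in a single point and $F$ is absolutely continuous on each complementary piece, while almost every vertical segment avoids $L$ entirely. Combined with the distortion inequality holding a.e.\ on $\Omega$, the analytic definition then gives that $F$ is $K$-quasiconformal on $V$.

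Next I would remove the line $L$ itself by passing to the image side. As $F|_V : V \to \mathbb{R}^2 \setminus L$ is quasiconformal, its inverse $G := F^{-1}$ is $K$-quasiconformal on $\mathbb{R}^2 \setminus L$. Now $G$ is a homeomorphism of all of $\mathbb{R}^2$ that is quasiconformal off the single line $L$, which has zero area, so the identical gluing argument shows $G$ is $K$-quasiconformal on $\mathbb{R}^2$. Hence $F = G^{-1}$ is a $K$-quasiconformal self-homeomorphism of $\mathbb{R}^2$, and every global quasiconformal self-map of $\mathbb{R}^2$ is $\eta$-quasisymmetric with $\eta$ depending only on $K$. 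This yields the quasisymmetry of $F$, and therefore of $f$.

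The main obstacle is the removability of the singular sets, and in particular the fact that $F^{-1}(L) = \varphi(f^{-1}(Y))$ is a priori only a topological line: since $f$ is merely a homeomorphism, $F^{-1}(L)$ could be pathological (for example of positive area, as quasicircles can be), so it cannot be discarded directly by an area-based removability theorem. The device circumventing this is to never remove $F^{-1}(L)$ from the domain of $F$ at all; instead it is removed indirectly on the target side, where it appears as the image $G(L)$ of the smooth line $L$. The crucial feature is that the gluing argument used in both steps requires only that the exceptional set have zero area \emph{in the domain} of the map being reconstituted, with no hypothesis on the area of its image, which is precisely what permits the possibly wild set $F^{-1}(L)$ to be handled.
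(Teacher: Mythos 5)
Your proof is correct and is essentially the paper's own argument: conjugate by $\varphi$, remove the vertical line from the domain side to get quasiconformality of $F$ on $\mathbb{R}^2 \setminus F^{-1}(L)$, then pass to the inverse so that the remaining exceptional set is the honest line $L$ (never the potentially wild $F^{-1}(L)$) and remove it the same way; the only difference is that the paper invokes V\"ais\"al\"a's removability theorem for sets of $\sigma$-finite linear measure (Theorem 35.1 of his book) where you carry out the ACL gluing by hand. One small correction: your closing claim that the gluing needs \emph{only} zero area in the domain is an overstatement---zero-area sets are not removable for quasiconformality in general, and your argument really uses that both exceptional sets lie on the fixed vertical line $L$, so that almost every axis-parallel segment meets them in at most one point---but since this holds in both applications, the proof is unaffected.
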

\begin{proof}
Let $\widetilde{f} = \varphi \circ f \circ \varphi^{-1}$, and let $\widetilde{Y}$ denote the vertical axis as a subset of $\mathbb{R}^2$. Notice that $\mathbb{R}^2 \setminus \widetilde{f}^{-1}(\widetilde{Y})$ has two components, which we denote by $E_1$ and $E_2$. Now, $\widetilde{f}$ is metrically quasiconformal on each component of $E_i \setminus \widetilde{Y}$. We apply a theorem of V\"ais\"al\"a \cite[Theorem 35.1]{Vais1}, which states that any set of $\sigma$-finite 1-dimensional Lebesgue measure is negligible, to conclude that $\widetilde{f}$ is metrically quasiconformal on both $E_1$ and $E_2$. It follows then that $\widetilde{f}^{-1}$ is metrically quasiconformal on $\mathbb{R}^2 \setminus \widetilde{Y}$. By the same theorem of V\"ais\"al\"a, $\widetilde{f}^{-1}$ is metrically quasiconformal on all of $\mathbb{R}^2$. This suffices to show that $\widetilde{f}$, and hence $f$ itself, is quasisymmetric. 
\end{proof}

As a particular consequence, it follows that the inverse of a metrically quasiconformal homeomorphism $f: \mathbb{G}_\alpha^2 \rightarrow \mathbb{G}_\alpha^2$ is also metrically quasiconformal. This answers a question of Ackermann \cite[p. 315]{Ack}. 

\section{Geometric quasiconformality}\label{sec:gqc}

This section is devoted to proving the implication (a) $\Longrightarrow$ (d) that every quasisymmetric map is geometrically quasiconformal, although we require some technical groundwork. In preparation for the following lemma, we recall the following dilation property of $\mathbb{G}_\alpha^2$:

\begin{equation} \label{equ:grushin_dilation}
d_\alpha((\lambda x_1,\lambda^{1+\alpha} y_1),(\lambda x_2,\lambda^{1+\alpha} y_2)) = \lambda d_\alpha((x_1,y_1),(x_2,y_2)) .
\end{equation}
In particular, for any points $(0,b_1), (0,b_2) \in Y$, $d_\alpha((0,b_1),(0,b_2)) = C|b_2 - b_1|^{1/(1+\alpha)}$, where $C = d_\alpha((0,0), (0,1))$. Hence the space $(Y, d_\alpha|_{Y\times Y})$ is isometric, up to rescaling of the metric, to the {\it $\beta$-snowflake line} $(\mathbb{R}, |\cdot|^\beta)$ for $\beta =1/(1+\alpha)$. 

For the statement of the following lemma, we say that a metric $d$ on a totally ordered space 
$Y$ is {\it monotone} if for all $w,x,y,z \in Y$ satisfying $w\le x \le y \le z$ it holds that $d(x,y) \le d(w,z)$. Notice that any $\beta$-snowflake line is monotone, and hence $(Y, d_\alpha|_{Y \times Y})$, is monotone when considered with the total ordering given by comparing their second coordinates. 

\begin{lemm} \label{lemm:totally_ordered} Let $(X,d)$ be a metric space and let $Y$ be a totally ordered subset of $X$ such that $d|_{Y\times Y}$ is monotone.
Given any points $y_0<y_1 < \cdots <y_n$ in $Y$ and a permutation 
$\sigma$ of $\{0, 1, \ldots , n\}$,
$$\sum_{k=1}^n d(y_{k-1},y_k) \leq \sum_{j=1}^n d(y_{\sigma(j-1)}, y _ {\sigma(j)}).$$
In particular, 
if $\gamma:I\to X$ is any rectifiable curve whose image contains points $y_0<y_1 < \cdots <y_n $ of $Y$, then
$\text{length}(\gamma)\ge \sum_{k=1}^n d(y_{k-1},y_k)$.
\end{lemm}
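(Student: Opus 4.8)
The plan is to prove the combinatorial inequality for permutations first and then deduce the curve statement as a short corollary. The first inequality asserts that, among all orders in which one can traverse $y_0, \ldots, y_n$, the monotone order is cheapest; this is a metric analogue of the fact that a shortest traveling-salesman path through collinear points visits them in order. I would establish it by induction on $n$, at each stage deleting the largest point $y_n$ from the permuted path $y_{\sigma(0)} \to \cdots \to y_{\sigma(n)}$. The base case $n=1$ is trivial since both sides equal $d(y_0,y_1)$.

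For the inductive step I would locate the position of $y_n$ in the permuted path and split into two cases. If $y_n$ is an endpoint, delete it with its single incident edge $d(y_c,y_n)$ (where $y_c \le y_{n-1}$); the remaining path is a permutation of $\{y_0,\ldots,y_{n-1}\}$, whose length is at least $\sum_{k=1}^{n-1} d(y_{k-1},y_k)$ by the inductive hypothesis, while monotonicity applied to $y_c \le y_{n-1} \le y_n \le y_n$ gives $d(y_c,y_n) \ge d(y_{n-1},y_n)$, recovering the missing gap. If $y_n$ is interior with neighbors $y_a$ and $y_b$, delete it and join $y_a$ to $y_b$ by a shortcut; the resulting path is again a permutation of $\{y_0,\ldots,y_{n-1}\}$, so by induction its length dominates $\sum_{k=1}^{n-1} d(y_{k-1},y_k)$, and the original length exceeds it by the detour term $d(y_a,y_n) + d(y_b,y_n) - d(y_a,y_b)$.

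The crux, and the one genuinely non-routine step, is to show this detour term is at least $d(y_{n-1},y_n)$. Assuming $y_a \le y_b \le y_{n-1} < y_n$, monotonicity applied to $y_a \le y_a \le y_b \le y_n$ yields $d(y_a,y_b) \le d(y_a,y_n)$, so the detour term is at least $d(y_b,y_n)$; a second application to $y_b \le y_{n-1} \le y_n \le y_n$ gives $d(y_b,y_n) \ge d(y_{n-1},y_n)$. This is exactly where the monotonicity hypothesis, rather than the triangle inequality alone, is essential: the triangle inequality only guarantees the detour term is nonnegative. Combining this with the inductive bound completes the step and hence the first inequality for arbitrary $\sigma$.

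Finally, for the curve statement I would choose parameters $s_k$ with $\gamma(s_k)=y_k$ and sort them as $s_{\pi(0)} < \cdots < s_{\pi(n)}$, obtaining a permutation $\pi$ that records the order in which the $y_k$ occur along $\gamma$ (the $s_k$ are distinct because the $y_k$ are). The standard lower bound for the length of a rectifiable curve by its inscribed polygons gives $\text{length}(\gamma) \ge \sum_{j=1}^n d(y_{\pi(j-1)}, y_{\pi(j)})$, and applying the first part of the lemma to $\pi$ bounds this below by $\sum_{k=1}^n d(y_{k-1},y_k)$, as desired.
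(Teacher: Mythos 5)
Your proof is correct, but your inductive step is organized genuinely differently from the paper's. You delete the maximal point $y_n$ from the permuted path, split into the endpoint/interior cases, and recover the missing term via the detour bound $d(y_a,y_n)+d(y_b,y_n)-d(y_a,y_b)\ge d(y_{n-1},y_n)$, which you correctly derive from two applications of monotonicity (and which, as you note, the triangle inequality alone cannot supply). The paper instead normalizes the permutation: after arranging (by reversing the order, a symmetry of both hypothesis and conclusion) that $y_0$ occurs before $y_{n+1}$ in the path, it sorts the suffix of the path following $y_0$ --- this uses the inductive hypothesis once on that suffix plus one monotonicity comparison for the edge entering it --- and then, since the sorted suffix ends at the maximum $y_{n+1}$, applies the inductive hypothesis a second time to the first $n+1$ vertices together with one more monotonicity comparison for the final edge. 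So the paper's step invokes the inductive hypothesis twice but needs no case analysis, whereas yours invokes it once at the cost of the endpoint/interior split and the shortcut inequality; yours is the classical ``delete the extreme point'' traveling-salesman argument and is arguably more elementary, while the paper's sorting argument never removes a vertex from the path. Your deduction of the curve statement (choose parameters $s_k$, sort them, apply the inscribed-polygon lower bound, then the permutation inequality) is exactly the argument the paper leaves implicit in its ``In particular.''
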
 
\begin{proof}
We use induction on $n$. The cases $n=0$ and $n=1$ are clear. 
Now, fix $n \geq 1$ and assume the lemma holds for all $k\le n$. Fix points $y_0 < y_1 < \cdots < y_{n+1}$ in $Y$ and a permutation $\sigma$ of $\{0, 1, \ldots , n+1\}$.
Choose $a,b \in \{0, 1, \ldots , n+1\}$  such that $\sigma(a) = 0$ and $\sigma(b) = n+1$.
By replacing $\sigma$ by $\tilde{\sigma}$, $\tilde{\sigma}(k):= n+1-\sigma(k)$  if necessary, we may assume that $a<b$.

Let $\tau$ be the permutation of $\{0, 1, \ldots , n+1\}$ satisfying $\tau(j) = \sigma(j)$ for all $0 \le j\le a$ and
$\tau(a+1)< \tau(a+2) <  \cdots < \tau(n+1)=n+1$.  By the inductive assumption, $\sum_{j=a+2}^{n+1}  d(y_{\sigma(j-1)} , y_{\sigma(j)}) \ge \sum_{k=a+2}^{n+1} d(y_{\tau(k-1)}, 
y_{\tau(k)}).$
Since $y_0 < y_{\tau(a+1)} \le y_{\sigma(a+1)}$, it follows from monotonicity that $\sum_{j=1}^{n+1}  d(y_{\sigma(j-1)} , y_{\sigma(j)}) \ge \sum_{k=1}^{n+1} d(y_{\tau(k-1)}, 
y_{\tau(k)}).$

\sloppy Since $\tau(n+1) = n+1$, the inductive assumption implies that $\sum_{k=1}^n d(y_{\tau(k-1),\tau(k)} ) \ge \sum_{l=1}^n d(y_{l-1},y_l).$ Since $y_{\tau(n)} \le y_n <y_{n+1}$, monotonicity implies that $\sum_{k=1}^{n+1} d(y_{\tau(k-1)} , y_{\tau(k)} ) \ge \sum_{l=1}^{n+1} d(y_{l-1}, y_l)$. The lemma follows. 
\end{proof}

We use this lemma to prove the following key fact, which we state in somewhat more generality than required.

\begin{prop} \label{prop:hausdorff_intersection}
Let $(X,d_X)$ be a metric space and $Y \subset X$ a subspace similar to the snowflaked line $(\mathbb{R},d_E^\beta)$ for some $\beta \in (0,1)$. For any rectifiable curve $\gamma: I \to X$, it holds that  $\mathcal{H}_X^1(\im(\gamma) \cap Y) = 0$.
\end{prop}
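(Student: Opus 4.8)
The plan is to argue by contradiction: suppose $\gamma \colon I \to X$ is rectifiable but $\mathcal{H}_X^1(\im(\gamma) \cap Y) > 0$, and derive that $\gamma$ must have infinite length, contradicting rectifiability. The key leverage is Lemma \ref{lemm:totally_ordered}, which says that for any finite increasing chain $y_0 < y_1 < \cdots < y_n$ of points of $Y$ lying on $\im(\gamma)$, the length of $\gamma$ dominates $\sum_{k=1}^n d(y_{k-1}, y_k)$. So it suffices to produce, for every $M > 0$, a finite chain in $\im(\gamma) \cap Y$ whose consecutive-gap sum exceeds $M$. The obstruction to this is purely metric: on an ordinary (non-snowflaked) line, a fine partition of an interval gives consecutive gaps that sum to the interval's length, so one cannot make the sum blow up. The snowflaking is exactly what breaks this.

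First I would transfer the problem to the snowflake line. Identify $Y$ (up to the similarity constant, which only rescales everything and does not affect whether a measure is zero or a length is infinite) with $(\mathbb{R}, d_E^\beta)$, so that $d(y_{k-1}, y_k) = |t_k - t_{k-1}|^\beta$ where $t_k \in \mathbb{R}$ is the real parameter of $y_k$ under the identification. The set $A := \im(\gamma) \cap Y$ corresponds to a subset of $\mathbb{R}$; the hypothesis $\mathcal{H}_X^1(A) > 0$ translates, via the similarity $d = d_E^\beta$, into a statement about the $\beta$-dimensional Hausdorff measure of the corresponding Euclidean set. Indeed, under the snowflake identification one has $\mathcal{H}^1_{d_E^\beta} = c\,\mathcal{H}^{1/\beta}_{d_E}$ (up to a constant $c = c(\beta)$), so $\mathcal{H}_X^1(A) > 0$ forces the corresponding Euclidean set $B \subset \mathbb{R}$ to have positive $\mathcal{H}_{d_E}^{1/\beta}$-measure, hence in particular to be uncountable and to have positive outer Hausdorff measure of that dimension $1/\beta > 1$.

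The core estimate is then a summation lemma on $\mathbb{R}$: if $B \subset \mathbb{R}$ has positive $s$-dimensional Hausdorff measure for some $s > 1$, then consecutive-gap sums of finite chains from $B$ are unbounded, i.e. $\sup \sum_k |t_k - t_{k-1}|^{1/s}$ over increasing finite chains $t_0 < \cdots < t_n$ in $B$ is infinite. The cleanest route is the contrapositive bound coming from the definition of Hausdorff measure: if such a chain sum were bounded by $M$, I would cover $B$ (or a compactly-contained piece of it of positive measure) by the intervals $[t_{k-1}, t_k]$ arising from a chain, and compare $\sum_k |t_k - t_{k-1}|$ — the genuine diameters — against the $\beta$-th powers. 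Since the exponent $\beta = 1/s < 1$, a bound on $\sum |t_k - t_{k-1}|^\beta$ combined with making the partition arbitrarily fine forces $\sum |t_k - t_{k-1}|^{1/\beta}$, i.e. the candidate $s$-dimensional premeasure, to zero as the mesh shrinks; taking the infimum over such covers shows $\mathcal{H}^s(B) = 0$, the desired contradiction. Concretely, on a subinterval of length $\ell$ partitioned into $N$ equal pieces, $\sum |t_k - t_{k-1}|^\beta = N (\ell/N)^\beta = \ell^\beta N^{1-\beta} \to \infty$, so the gap sums really do blow up whenever $B$ contains a nondegenerate interval's worth of accumulation; the measure-theoretic version just replaces ``interval'' by ``positive-measure set'' via a covering/packing argument.

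I expect the main obstacle to be making the last step rigorous at the level of a general positive-measure set $B$ rather than an interval: a positive-$\mathcal{H}^s$ set need not contain any interval, and $\gamma$ meets $Y$ only along $A$, so I cannot freely insert partition points. The fix is to extract finitely many well-separated points of $A$ realizing large gap sums directly from the positivity of measure — for instance, choosing points of $A$ along a fine dyadic grid, using that the $\mathcal{H}^s$-mass forces at least one point of $A$ in a definite fraction of the grid cells (a counting/pigeonhole argument against the Hausdorff-measure lower bound). One must also check the harmless technical point that the similarity constant and the passage between $\mathcal{H}_X^1$ restricted to $Y$ and the Euclidean Hausdorff measure are handled correctly, but since we only care about the dichotomy ``zero versus positive,'' these constants are irrelevant. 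Once the unbounded gap sum is in hand, Lemma \ref{lemm:totally_ordered} immediately yields $\mathrm{length}(\gamma) = \infty$, completing the contradiction.
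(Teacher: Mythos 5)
Your overall plan---reduce to a one-dimensional statement about gap sums via Lemma \ref{lemm:totally_ordered}---is the right one, and in contrapositive form it is the skeleton of the paper's own proof. But the execution has a fatal error at the very first step: the snowflake measure identity is backwards. Under the identification of $Y$ with $(\mathbb{R},d_E^\beta)$, a set of Euclidean diameter $\delta$ has snowflake diameter $\delta^\beta$, so $\mathcal{H}^1_X$ restricted to $Y$ coincides with $\mathcal{H}^{\beta}_E$, \emph{not} with $c\,\mathcal{H}^{1/\beta}_E$; the number $1/\beta$ is the Hausdorff \emph{dimension} of the snowflake line, not the exponent in the measure identity. (The paper records the correct identity, $\mathcal{H}^1_X(A)=\mathcal{H}^\beta_E(A)$ for $A\subset Y$, at the start of its proof.) This is not a harmless bookkeeping slip: with your identity, the contradiction hypothesis becomes ``$B\subset\mathbb{R}$ has positive $\mathcal{H}^{1/\beta}_E$-measure with $1/\beta>1$,'' which no subset of the real line can satisfy, so your ``core estimate'' is vacuously true and the argument never engages an actual set. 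A sanity check exposes the problem: if your identity held, the proposition would follow for \emph{arbitrary} continuous curves, with rectifiability playing no role; but a sub-arc of $Y$ is itself a continuous (non-rectifiable) curve meeting $Y$ in a set of infinite $\mathcal{H}^1_X$-measure.

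Even after correcting the exponents, so that the needed estimate reads ``$\mathcal{H}^\beta_E(B)>0$ (with $\beta<1$) implies the sums $\sum_k|t_k-t_{k-1}|^\beta$ over chains in $B$ are unbounded,'' your proposed mechanisms fail in exactly the hard case. The equal-partition computation $N(\ell/N)^\beta\to\infty$ needs $B$ to contain an interval, and chains cannot be made arbitrarily fine anyway, since their points must lie in $B$ and the mesh is bounded below by the largest complementary gap. The dyadic pigeonhole fix works only when $B$ carries mass in some dimension strictly above $\beta$: for positive Lebesgue measure $\lambda$ one gets about $\lambda 2^n$ occupied cells at scale $2^{-n}$ and gap sums $\gtrsim \lambda\,2^{n(1-\beta)}\to\infty$, but for a set of positive $\mathcal{H}^\beta$-mass and zero Lebesgue measure (a Cantor set of dimension exactly $\beta$, which is the generic case here) the count is only about $2^{n\beta}$ occupied cells, so the pigeonhole bound is $2^{n\beta}\cdot 2^{-n\beta}$, a constant---no blow-up. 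In that case the divergence comes from the complementary intervals $(a_i,b_i)$ of $B$, and detecting it requires the paper's argument: bounded chain sums force $\sum_i|b_i-a_i|^\beta<\infty$; Lebesgue-nullity of $B$ then says the complementary intervals inside each residual closed interval fill up its full Euclidean length, so by subadditivity of $x\mapsto x^\beta$ the snowflake premeasure of the residual intervals is bounded by a \emph{tail} of this convergent series, which tends to zero, giving $\mathcal{H}^\beta_E(B)=0$. That tail-of-a-convergent-series step is the crux of the proposition and is absent from your proposal; once you supply it, what you have is precisely the paper's proof in contrapositive form.
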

\begin{proof}
For simplicity, we will identify $Y$ with $(\mathbb{R},d_E^\beta)$ after rescaling the metric $d_X$ if necessary. Let $\gamma: I \rightarrow X$ be a curve, and let $Y_\gamma = \im(\gamma) \cap Y$. For simplicity, we will assume that $Y_\gamma \subset [0,1]$ and $\{0,1\} \subset \im(\gamma)$. 
We begin with the observation that $\mathcal{H}^1_X(A) = \mathcal{H}_E^\beta(A)$ for any subset $A \subset Y$. Here $\mathcal{H}_E^\beta$ refers to the $\beta$-Hausdorff measure relative to the Euclidean metric. Since $\mathcal{H}^1_X(Y_\gamma) \leq \ell(\gamma) < \infty$, it follows that $\mathcal{H}_E^1(Y_\gamma) = 0$. Now $U = [0,1] \setminus Y_\gamma$ is open and thus the union of countably many disjoint open intervals $U_k = (a_k, b_k)$. 

For a given $k \in \mathbb{N}$, the set $[0,1] \setminus \bigcup_{i=1}^k U_i$ consists of $k+1$ disjoint closed intervals, which we will denote by $F_k^j$, $1 \leq j \leq k+1$. Notice that the collection $\{F_k^j\}_{j=1}^{k+1}$ covers $Y_\gamma$; hence it will suffice to show that $\lim_{k \rightarrow \infty} \sum_{j=1}^{k+1} \diam_X(F_k^j) = 0$.   

Now for any fixed $k$, $\sum_{j=1}^k |b_j - a_j|^\beta \leq \ell(\gamma)$; this follows from Lemma \ref{lemm:totally_ordered} and the fact that for all $j \in \mathbb{N}$ the points $a_j, b_j$ are in $Y_\gamma$. This implies in particular that $\sum_{j=1}^\infty |b_j - a_j|^\beta \leq \ell(\gamma) < \infty$.
For a fixed $k \in \mathbb{N}$, the open intervals $U_{k+1}, U_{k+2}, \ldots$ are each contained in one of the closed intervals $F_k^j$. For each $1 \leq j \leq k+1$, let $\mathscr{U}_k^j$ denote the subcollection of $\{U_i\}_{i=k+1}^\infty$ consisting of those sets contained in $F_k^j$. Since $\mathcal{H}_E^1(Y_\gamma) = 0$, it must hold that $\sum_{U_i \in \mathscr{U}_k^j} |b_i - a_i| = \diam_E(F_k^j) =  \diam_X(F_k^j)^{1/\beta}$. By the subadditivity of the map $x \mapsto x^\beta$ we obtain $\diam_X(F_k^j) \leq \sum_{U_i \in \mathscr{U}_k^j} |b_i - a_i|^\beta$ and hence
$$\sum_{j=1}^{k+1} \diam_X(F_k^j) \leq \sum_{i=k+1}^\infty |b_i - a_i|^\beta. $$ 
The right-hand side tends to zero as $k \rightarrow \infty$, which establishes the proposition. 
\end{proof}

\begin{rem}
While this remark is not needed for our main result, we point out that the previous proposition is sharp for the $\alpha$-Grushin plane. For any $s<1$, it is possible to construct a rectifiable curve $\gamma$ such that $\mathcal{H}_\alpha^s(\im(\gamma) \cap Y)>0$. This can be done using an appropriate Cantor set construction. Identify $\{0\} \times [0,1] \subset Y$ with the interval $[0,1]$. Choose $L \in (0, 1/2)$ to satisfy $s = (1+\alpha)\log 2/\log(1/L)$. Let $V_1^1 = [a_1^1, b_1^1]$ be an interval of (Euclidean) length $1-2L$ centered at $1/2$. Let $V_2^1 = [a_2^1, b_2^1]$ and $V_2^2 = [a_2^2, b_2^2]$ be intervals of length $L(1-2L)$ centered in each component of $[0,1] \setminus V_1^1$. Define inductively intervals $V_n^j$ of length $L^{n-1}(1-2L)$ in a similar manner. Observe that $[0,1]\setminus \bigcup_{n,j} V_n^j$ is a Cantor set of Euclidean Hausdorff dimension $\log2/\log(1/L)$, which we denote by $E$. The Grushin Hausdorff dimension of $E$ is then equal to $s$. 

For each $n,j$, define $\gamma|[a_n^j, b_n^j] \rightarrow \mathbb{G}_\alpha^2$ by traversing a geodesic connecting $(0,a_n^j)$ to $(0,b_n^j)$ at constant speed. Then define $\gamma: [0,1] \rightarrow \mathbb{G}_\alpha^2$ by completion. Notice that $E = \im(\gamma) \cap Y$, and that $\ell(\gamma|[a_n^j, b_n^j]) = (b_n^j - a_n^j)^{1/(1+\alpha)} = (L^{n-1}(1-2L))^{1/(1+\alpha)}$. Hence 
$$\ell_\alpha(\gamma) = \sum_{n=1}^\infty 2^{n-1}(L^{n-1}(1-2L))^{1/(1+\alpha)}.$$
By the ratio test, this converges when $L<1/2^{1+\alpha}$ and diverges when $L> 1/2^{1+\alpha}$. Observe that the first case occurs precisely when $s<1$. 

By concatenating curves $\gamma_n$  of length $1/2^n$ such that $\mathcal{H}_\alpha^{1-1/n}(\im(\gamma) \cap Y) > 0$, we obtain a curve with $\dim_{\mathcal{H}_\alpha}(\im \gamma \cap Y) = 1$. 
\end{rem} 

Our next goal is to prove Proposition \ref{prop:varphi_gqc} that the canonical quasisymmetry $\varphi$ is also geometrically conformal. We require another lemma.

\begin{lemm}\label{lemm:path_redefine} 
Let $\rho \in L^2(\mathbb{R}^2)$ be a nonnegative function. For all $\delta, \epsilon >0$, there exists a set $A \subset (0,\delta)$ of positive measure such that $\int_{\gamma_r} \rho\, ds_E < \epsilon$ for all $r \in A$, where $\gamma_r$ is the curve traversing the circle of radius $r$ centered at the origin. 
\end{lemm}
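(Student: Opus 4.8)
The plan is to exploit integrability of $\rho$ via a polar-coordinate argument combined with an averaging (pigeonhole) principle. First I would write $\int_{\gamma_r} \rho\, ds_E = \int_0^{2\pi} \rho(r\cos\theta, r\sin\theta)\, r\, d\theta$, so that the quantity we wish to control on a positive-measure set of radii is a function of $r$ obtained by integrating $\rho$ over the circle of radius $r$. Define $g(r) := \int_{\gamma_r}\rho\,ds_E$. The natural thing is to estimate $\int_0^\delta g(r)\, dr$ and observe that finiteness of this integral forces $g(r)$ to be small on a set of positive measure inside $(0,\delta)$.

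The key computation is to relate $\int_0^\delta g(r)\,dr$ to the $L^2$ norm of $\rho$. By Fubini and the Cauchy--Schwarz inequality applied on each circle,
\begin{equation*}
g(r) = \int_0^{2\pi}\rho(r\cos\theta, r\sin\theta)\, r\, d\theta \le \left(\int_0^{2\pi} \rho^2 r\, d\theta\right)^{1/2}\left(\int_0^{2\pi} r\, d\theta\right)^{1/2} = (2\pi r)^{1/2}\left(\int_0^{2\pi}\rho^2 r\, d\theta\right)^{1/2}.
\end{equation*}
Then integrating $g$ against $dr$ and applying Cauchy--Schwarz once more in the $r$ variable,
\begin{equation*}
\int_0^\delta g(r)\, dr \le (2\pi)^{1/2}\int_0^\delta r^{1/2}\left(\int_0^{2\pi}\rho^2 r\, d\theta\right)^{1/2} dr \le (2\pi)^{1/2}\left(\int_0^\delta r\, dr\right)^{1/2}\left(\int_0^\delta\!\!\int_0^{2\pi}\rho^2 r\, d\theta\, dr\right)^{1/2}.
\end{equation*}
The last factor is at most $\|\rho\|_{L^2(\mathbb{R}^2)}$ since $\rho^2 r\, d\theta\, dr$ is the Lebesgue measure in polar coordinates, and the middle factor equals $(\delta^2/2)^{1/2}$. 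Hence $\int_0^\delta g(r)\, dr \le \pi^{1/2}\delta\,\|\rho\|_{L^2}<\infty$, and in particular $g$ is finite for a.e.\ $r$.

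Finally I would argue by contradiction for the conclusion: suppose no such set $A$ of positive measure exists, i.e.\ $g(r)\ge\epsilon$ for a.e.\ $r\in(0,\delta)$. Then $\int_0^\delta g(r)\, dr \ge \epsilon\delta$, but this contradicts the bound above once $\delta$ is taken so small—wait, the bound scales linearly in $\delta$ as well, so this crude comparison does not immediately close. The cleaner route is: the set $B=\{r\in(0,\delta): g(r)\ge\epsilon\}$ satisfies $\epsilon\,|B| \le \int_B g\, dr \le \int_0^\delta g\, dr \le \pi^{1/2}\delta\|\rho\|_{L^2}$, which only shows $|B|$ is small relative to $\epsilon$, not that its complement has positive measure. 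To fix this I would instead use that $g\in L^1(0,\delta)$ implies $g(r)<\infty$ a.e., and crucially that Lebesgue's differentiation / the fact that a nonnegative $L^1$ function cannot be bounded below by a positive constant a.e.\ on a set of full measure unless its integral is correspondingly large; the correct statement is simply that $\{r: g(r)<\epsilon\}$ cannot be null, because if it were, $g\ge\epsilon$ a.e.\ on $(0,\delta)$ would force $\int_0^\delta g\ge \epsilon\delta$, which is consistent, so the genuine obstacle is that $\epsilon$ is fixed but $\delta$ is not forced small. The real resolution: shrink the interval. Since $g\in L^1_{\mathrm{loc}}$, for a Lebesgue point consideration we may instead apply the estimate on $(0,\delta')$ for $\delta'\le\delta$ and note $\int_0^{\delta'} g\, dr \le \pi^{1/2}\delta'\|\rho\|_{L^2}\to 0$, so choosing $\delta'$ small enough that $\pi^{1/2}\delta'\|\rho\|_{L^2}<\epsilon\delta'$ is impossible for fixed $\epsilon$; thus the honest argument is the contradiction $\epsilon|B^c|^{}$—I expect the main obstacle to be precisely this bookkeeping, and the correct fix is that $g$ finite a.e.\ already yields, for each fixed $\epsilon$, that $\{r\in(0,\delta): g(r)<\epsilon\}$ has positive measure whenever $g$ is not a.e.\ bounded below by $\epsilon$, which holds because $\operatorname{ess\,inf}_{(0,\delta)} g = 0$ (any positive essential infimum $c$ would give $\int_0^\delta g\ge c\delta$, fine, but $g$ takes arbitrarily small values since it is integrable and hence $\liminf_{r\to 0^+}$ along a positive-measure set is $0$). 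I would therefore phrase the conclusion as: since $g$ is integrable on $(0,\delta)$, its essential infimum there is $0$, so $\{g<\epsilon\}$ has positive measure, giving the desired set $A$.
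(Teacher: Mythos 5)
There is a genuine gap, and it sits exactly where you sensed trouble. Your computation is fine up to the point where you bound the last factor by $\|\rho\|_{L^2(\mathbb{R}^2)}$: by discarding the localization you are left with $\int_0^{\delta'} g(r)\,dr \leq \pi^{1/2}\delta'\|\rho\|_{L^2}$, whose right-hand side scales linearly in $\delta'$ just like the trivial lower bound $\epsilon\delta'$ coming from a hypothetical failure of the lemma, so (as you observed) no contradiction can be extracted at any scale. Your attempted rescue is then a non sequitur: integrability of $g$ on $(0,\delta)$ does \emph{not} imply $\operatorname{ess\,inf}_{(0,\delta)} g = 0$ (the constant function $g \equiv 1$ is integrable on $(0,\delta)$), so the assertion that ``$g$ takes arbitrarily small values since it is integrable'' is simply false, and with it the final conclusion collapses. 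What is true is that $\operatorname{ess\,inf} g = 0$ for this particular $g$, but proving that requires precisely the ingredient you threw away.

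The fix is to keep the $L^2$ norm restricted to the disk of radius $R$. Your own chain of inequalities, applied on $(0,R)$ with $R \leq \delta$ and without enlarging the domain of integration, gives
\begin{equation*}
\int_0^R g(r)\,dr \;\leq\; \sqrt{\pi}\,R\left(\int_{B_R}\rho^2\,dm\right)^{1/2},
\end{equation*}
which is the estimate in the paper (obtained there by Fubini plus H\"older on the disk; your circle-by-circle Cauchy--Schwarz followed by Cauchy--Schwarz in $r$ is the same computation). The crucial point is that $\int_{B_R}\rho^2\,dm \to 0$ as $R \to 0$ by dominated convergence (absolute continuity of the integral), so the \emph{average} $\frac{1}{R}\int_0^R g\,dr$ tends to $0$, not merely stays bounded. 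Choosing $R$ so small that this average is below, say, $\epsilon/\sqrt{2}$, Chebyshev's inequality shows that $\{r \in (0,R) : g(r) \geq \epsilon\}$ has measure at most $R/\sqrt{2} < R$, so its complement in $(0,R)$ is the desired positive-measure set $A \subset (0,\delta)$. In short: your estimates are the right ones, but the lemma is driven by the vanishing of the localized $L^2$ mass near the origin, not by the finiteness of the global norm.
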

\begin{proof} 
Fix $\epsilon > 0$ and let $R \in (0, \delta)$. We use $B_R$ to denote the ball of radius $R$ centered at the origin. Applying Fubini's theorem and H\"older's inequality gives
\begin{align*}
\int_0^R \left( \int_{\gamma_r} \rho\, ds_E \right)dr   = & \int_0^R \int_0^{2\pi} \rho(re^{i\theta}) r\,d\theta \,dr \\ & = \int_{B_R} \rho\,  dm 
\leq \sqrt{\pi}R \left( \int_{B_R} \rho^2\, dm \right)^{1/2}. 
\end{align*}
For sufficiently small $R$, we have $\int_{B_R} \rho^2\, dm < \epsilon^2/(2\pi)$ by the dominated convergence theorem, and so
$$\frac{1}{R} \int_0^R \left( \int_{\gamma_r} \rho\, ds_E \right)dr \leq \epsilon/\sqrt{2}.$$ The existence of the set $A$ follows.  
\end{proof}

Finally, before continuing with Proposition \ref{prop:varphi_gqc}, we write out a useful change of variables computation which holds for any $\rho \in L^2(\mathbb{G}_\alpha^2)$ with $\rho|_Y = 0$. This actually follows from the fact that $\varphi$ is a conformal map between Riemannian manifolds outside $Y$, but it seems beneficial to give full details.  We will use $(x,y)$ for coordinates in $\mathbb{G}_\alpha^2$ and $(u,v)$ for coordinates in $\mathbb{R}^2$. Notice that $(x,y) = \varphi^{-1}(u,v) = ((1+\alpha)^{1/(1+\alpha)} \sgn(u)|u|^{1/(1+\alpha)},v)$, though for convenience we will write simply $x=x(u)$. Also note that $|J \varphi| = |x|^\alpha$. 
Let $\gamma$ be a curve in $\mathbb{G}_\alpha^2$, let $\widetilde{\gamma} = \varphi \circ \gamma$, and let $\widetilde{\rho}(u,v) = |x(u)|^{-\alpha} \rho(x(u),v)\chi_{\mathbb{R}^2 \setminus \widetilde{Y}}$, where as before $\widetilde{Y}$ is the vertical axis of $\mathbb{R}^2$. We have 
\begin{align*}
\int_{\gamma} \rho\, ds_\alpha = \int_\gamma \rho(x,y) \sqrt{dx^2 + \frac{dy^2}{|x|^{2\alpha}}} = \int_{\widetilde{\gamma}} \frac{\rho(x(u),v)}{|x(u)|^\alpha}\sqrt{du^2+ dv^2} = \int_{\widetilde{\gamma}} \widetilde{\rho}\, ds_E
\end{align*}
and 
\begin{align*}
\int_{\mathbb{G}_\alpha^2} \rho^2\, d\mathcal{H}_\alpha^2 & = \iint_{\mathbb{G}_\alpha^2} \rho(x,y)^2\frac{dx\,dy}{|x|^\alpha} = \iint_{\mathbb{R}^2} \left(\frac{\rho(x(u),v)}{|x(u)|^\alpha}\right)^2 du\,dv \\ & = \int_{\mathbb{R}^2} \widetilde{\rho}^2 dm. 
\end{align*}

\begin{prop} \label{prop:varphi_gqc}
The map $\varphi: \mathbb{G}_\alpha^2 \rightarrow \mathbb{R}^2$ is geometrically conformal, that is, geometrically quasiconformal in the sense of Definition \ref{defi:geometric_qc} with $K=1$.
\end{prop}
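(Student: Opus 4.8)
The plan is to prove the two modulus inequalities comprising (\ref{equ:geometric_qc}) with $K=1$, namely $\Mod\Gamma(E,F)\le\Mod\varphi\Gamma(E,F)$ and $\Mod\varphi\Gamma(E,F)\le\Mod\Gamma(E,F)$ for all disjoint continua $E,F\subset\mathbb{G}_\alpha^2$; equality is exactly conformality. The engine is the change of variables recorded just above: it gives a correspondence $\rho\leftrightarrow\widetilde{\rho}$, with $\widetilde{\rho}(u,v)=|x(u)|^{-\alpha}\rho(x(u),v)\chi_{\mathbb{R}^2\setminus\widetilde{Y}}$, that preserves energy ($\int\rho^2\,d\mathcal{H}_\alpha^2=\int\widetilde{\rho}^2\,dm$) and line integrals ($\int_\gamma\rho\,ds_\alpha=\int_{\widetilde{\gamma}}\widetilde{\rho}\,ds_E$ for $\widetilde{\gamma}=\varphi\circ\gamma$), provided $\rho$ vanishes on $Y$. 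I would first observe, using Proposition \ref{prop:hausdorff_intersection}, that in computing $\Mod\Gamma(E,F)$ one may restrict to admissible $\rho$ with $\rho|_Y=0$: replacing $\rho$ by $\rho\chi_{\mathbb{G}_\alpha^2\setminus Y}$ leaves $\int_\gamma\rho\,ds_\alpha$ unchanged for every rectifiable $\gamma$, since $\im(\gamma)\cap Y$ is $\mathcal{H}_\alpha^1$-null, and does not increase the energy. Thus each modulus is an infimum over admissible functions vanishing on the relevant singular line.

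For the inequality $\Mod\Gamma(E,F)\le\Mod\varphi\Gamma(E,F)$ I would take $\widetilde{\rho}$ admissible for $\varphi\Gamma(E,F)$ and pull it back to $\rho(x,y)=|x|^\alpha\,\widetilde{\rho}(\varphi(x,y))$ off $Y$, with $\rho|_Y=0$; the energies agree. To verify admissibility, fix a rectifiable $\gamma\in\Gamma(E,F)$. Its image is compact, so $|x|^\alpha$ is bounded there and $\widetilde{\gamma}=\varphi\circ\gamma$ has finite Euclidean length. Moreover $\widetilde{\gamma}\cap\widetilde{Y}=\varphi(\im(\gamma)\cap Y)$ has Euclidean length zero: by Proposition \ref{prop:hausdorff_intersection} the set $\im(\gamma)\cap Y$ is $\mathcal{H}_\alpha^1$-null, so the corresponding subset of $\widetilde{Y}$ has Euclidean Hausdorff dimension at most $1/(1+\alpha)<1$ and hence vanishing $\mathcal{H}_E^1$-measure. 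The change of variables then gives $\int_\gamma\rho\,ds_\alpha=\int_{\widetilde{\gamma}}\widetilde{\rho}\,ds_E\ge1$, so $\rho$ is admissible and $\Mod\Gamma(E,F)\le\int\widetilde{\rho}^2\,dm$; taking the infimum yields the claim.

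The reverse inequality $\Mod\varphi\Gamma(E,F)\le\Mod\Gamma(E,F)$ is the main obstacle. Starting from an admissible $\rho$ for $\Gamma(E,F)$ with $\rho|_Y=0$ and pushing it forward to $\widetilde{\rho}$ (again energies agree), the difficulty is entirely in checking admissibility of $\widetilde{\rho}$ along an arbitrary rectifiable $\widetilde{\gamma}\in\varphi\Gamma(E,F)$: because $ds_\alpha=|x|^{-\alpha}ds_E$ blows up along $\widetilde{Y}$, the Grushin preimage $\gamma=\varphi^{-1}\circ\widetilde{\gamma}$ need not be rectifiable, so the admissibility of $\rho$ cannot be applied to it directly. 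I would first discard the subfamily of curves meeting $\widetilde{Y}$ in positive length, which has modulus zero (the function $\infty\cdot\chi_{\widetilde{Y}}$ is admissible for it and has zero energy). For the remaining $\widetilde{\gamma}$, the compact set $\widetilde{\gamma}\cap\widetilde{Y}$ has zero Euclidean length, and I would reroute $\widetilde{\gamma}$ off $\widetilde{Y}$ by surgery: cover $\widetilde{\gamma}\cap\widetilde{Y}$ by finitely many disjoint disks centered on $\widetilde{Y}$, and on each apply Lemma \ref{lemm:path_redefine} (translated to the disk's center, using that vertical translation is a Euclidean isometry under which $\widetilde{\rho}$ stays in $L^2$) to select a bounding circle with $\int\widetilde{\rho}\,ds_E<\epsilon$. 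Capping the portion of $\widetilde{\gamma}$ inside each disk, from first entry to last exit, by an arc of that circle produces a curve $\widetilde{\gamma}'\in\varphi\Gamma(E,F)$ with $\int_{\widetilde{\gamma}'}\widetilde{\rho}\,ds_E\le\int_{\widetilde{\gamma}}\widetilde{\rho}\,ds_E+N\epsilon$; crucially, $\widetilde{\gamma}'$ is bounded away from $\widetilde{Y}$ off the disks and meets $\widetilde{Y}$ only transversally along the finitely many arcs (a circle centered on $\widetilde{Y}$ crosses it with nonzero horizontal speed), where $\int|x(u)|^{-\alpha}\,ds_E$ converges because $\alpha/(1+\alpha)<1$, so the preimage $\gamma'=\varphi^{-1}\circ\widetilde{\gamma}'$ is rectifiable.

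Since $\gamma'$ is rectifiable, admissibility of $\rho$ gives $\int_{\widetilde{\gamma}'}\widetilde{\rho}\,ds_E=\int_{\gamma'}\rho\,ds_\alpha\ge1$, whence $\int_{\widetilde{\gamma}}\widetilde{\rho}\,ds_E\ge1-N\epsilon$; choosing the cover first and then $\epsilon=\epsilon'/N$ shows $\int_{\widetilde{\gamma}}\widetilde{\rho}\,ds_E\ge1-\epsilon'$ for every $\epsilon'>0$, hence $\widetilde{\rho}$ is admissible up to the discarded null family. This yields $\Mod\varphi\Gamma(E,F)\le\int\rho^2\,d\mathcal{H}_\alpha^2$, and the infimum gives the reverse inequality, completing the proof of equality. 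I expect the genuinely delicate points to be confined to this surgery: ensuring the finitely many disks can be taken pairwise disjoint while still covering the compact length-zero set $\widetilde{\gamma}\cap\widetilde{Y}$, organizing the capping coherently when the curve re-enters several disks, controlling the total arc contribution by $N\epsilon$ uniformly, and handling the minor edge case in which an endpoint of $\widetilde{\gamma}$ happens to lie on $\widetilde{Y}$.
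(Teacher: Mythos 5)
Your first inequality $\Mod\Gamma(E,F)\leq\Mod\varphi\Gamma(E,F)$ is correct and matches the paper's argument. The genuine problem lies in the reverse inequality, precisely at the point you flag as ``delicate'' and defer: the surgery. For a general rectifiable $\widetilde{\gamma}\in\varphi\Gamma(E,F)$, the compact zero-length set $K=\im\widetilde{\gamma}\cap\widetilde{Y}$ can be infinite --- even a Cantor set --- and your plan requires covering \emph{all} of $K$ by finitely many disks centered on $\widetilde{Y}$ whose bounding circles have small total $\widetilde{\rho}$-integral. Your proposed quantifier order, ``choosing the cover first and then $\epsilon=\epsilon'/N$,'' cannot be implemented with Lemma \ref{lemm:path_redefine}: that lemma produces good circles only of \emph{arbitrarily small} radius about a fixed center, whereas once a covering configuration is fixed, the radii are bounded below (each disk must keep containing its share of the infinite set $K$), and on a range of radii bounded away from $0$ the circle integrals of a fixed $\widetilde{\rho}$ need not be small at all. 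Concretely, any admissible $\rho$ can be increased so that $\widetilde{\rho}\geq1$ on a neighborhood $\{|u|<\delta_0\}$ of the relevant portion of $\widetilde{Y}$ (this keeps $\rho$ admissible and in $L^2$); then every circle centered on $\widetilde{Y}$ of radius $r\geq r_0$ meets that strip in length at least a constant times $\min(r_0,\delta_0)$, so its integral is bounded below independently of $\epsilon$. The alternative order --- fix $\epsilon$ first, then choose the cover --- leaves $N=N(\epsilon)$ uncontrolled, so the error term $N\epsilon$ is not small. Whether an arbitrary compact set of zero length can always be enclosed by finitely many circles of small total $\widetilde{\rho}$-integral is a genuinely nontrivial question (it is in the vicinity of the Ahlfors--Beurling theorem that compact sets of zero length are null for extremal distance); it certainly does not follow from Lemma \ref{lemm:path_redefine}, and your sketch offers no substitute.

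The paper avoids this difficulty entirely, and this is the key idea missing from your proposal: \emph{before} any surgery, it replaces $\varphi\Gamma(E,F)$ by an equal-modulus family whose curves automatically meet $\widetilde{Y}$ in at most finitely many interior points. It uniformizes the ring domain separating $\widetilde{E}$ and $\widetilde{F}$ by a round annulus (Riemann mapping theorem for doubly connected domains) and takes the images of the radial segments under the conformal map $\psi$; since each such curve is the image of a segment under an analytic map, the identity theorem forces it either to lie entirely in $\widetilde{Y}$ (at most countably many such curves, which are discarded along with the unrectifiable ones) or to meet $\widetilde{Y}$ in a set with no interior accumulation points. Only after this reduction does the circle surgery of Lemma \ref{lemm:path_redefine} run: the finitely many touching points are fixed \emph{first}, and then circles with integral less than $\epsilon/n$ each are chosen at arbitrarily small scales, so the quantifiers come in the workable order. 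Without this reduction (or an equally strong one), your rerouting step has a genuine gap, and the rest of your argument --- which is otherwise sound, including the rectifiability of preimages of circles centered on $\widetilde{Y}$ and the dismissal of curves meeting $\widetilde{Y}$ in positive length --- cannot be completed as written.
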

\begin{proof}
Let $E, F \subset \mathbb{G}_\alpha^2$ be disjoint continua, and let $\Gamma$ be the family of curves connecting $E$ and $F$. Let $\widetilde{E}, \widetilde{F}$, and $\widetilde{\Gamma}$ denote their respective images under $\varphi$. Notice that if $\gamma \in \Gamma$ is rectifiable, then so is $\widetilde{\gamma} = \varphi \circ \gamma$. Recall as well that $\mathcal{H}_\alpha^1(\im \gamma \cap Y) = 0$ by Proposition \ref{prop:hausdorff_intersection}. Hence if $\widetilde{\rho}$ is admissible for $\widetilde{\Gamma}$, then $\rho(x,y) = \widetilde{\rho}(x|x|^\alpha,y)|x|^\alpha\chi_{\mathbb{G}_\alpha^2 \setminus Y}$ is admissible for $\Gamma$ with $\int_{\mathbb{G}_\alpha^2} \rho^2d\mathcal{H}_\alpha^2 = \int_{\mathbb{R}^2} \widetilde{\rho}^2 dm$ by the above change of variables argument. This establishes that $\Mod \Gamma \leq \Mod \widetilde{\Gamma}$. 

The reverse inequality $\Mod \widetilde{\Gamma} \leq \Mod \Gamma$ is more difficult. If $\widetilde{E}$ or $\widetilde{F}$ consists of a single point, then $\Mod \widetilde{\Gamma}=0$ and the claim follows trivially. Hence we may assume that neither $E$ nor $F$ is a single point. By the Riemann mapping theorem for doubly connected domains, there exists an annulus $A = \{z \in \mathbb{R}^2: 1 < |z| < r\}$ for some $r>1$ and a conformal map $\psi: A \rightarrow \widetilde{A}$, where $\widetilde{A}$ is the doubly-connected domain in the extended Euclidean plane $\widehat{\mathbb{R}}^2$ separating $\widetilde{E}$ and $\widetilde{F}$. Let $\Gamma_r$ be the family of radial curves connecting the two components of $\partial A$ and let $\widetilde{\Gamma}_r = \psi(\Gamma_r)$, where we remove from $\widetilde{\Gamma}_r$ the curve passing through the point at infinity should such exist. Observe that $\Mod \Gamma_r = \Mod \psi^{-1}(\widetilde{\Gamma})$ (see \cite[Section 7.5 and Section 11.3]{Vais1}), and hence $\Mod \widetilde{\Gamma}_r = \Mod \widetilde{\Gamma}$. 

We make the following crucial observation. For any curve $\widetilde{\gamma}: [0,1] \rightarrow \mathbb{R}^2$ contained in $\widetilde{\Gamma}_r$ such that there exists $t \in (0,1)$ with $\widetilde{\gamma}(t) \in \widetilde{Y}$ an accumulation point of $\im \widetilde{\gamma} \cap \widetilde{Y}$, it follows that $\im \widetilde{\gamma} \subset \widetilde{Y}$. This is a consequence of the identity theorem for analytic mappings, since $\widetilde{\gamma}$ is the image of a radial line segment under the analytic map $\psi$. There are at most countably many curves $\widetilde{\gamma} \in \widetilde{\Gamma}_r$ with $\im \widetilde{\gamma} \subset \widetilde{Y}$. By removing any such curves, and also removing any unrectifiable curves, we see there is a family of rectifiable curves $\widetilde{\Gamma}_r^0 \subset \widetilde{\Gamma}_r$ such that $\Mod \widetilde{\Gamma}_r^0 = \Mod \widetilde{\Gamma}_r$ and such that for any curve $\widetilde{\gamma}: [0,1] \rightarrow \mathbb{R}^2$ in $\widetilde{\Gamma}_r^0$ and any $t \in (0,1)$, $\widetilde{\gamma}(t)$ is not an accumulation point of $\im \gamma \cap \widetilde{Y}$. 

Now let $\rho$ be admissible for $\Gamma$, where we may assume that $\rho|_Y = 0$ by Proposition \ref{prop:hausdorff_intersection}. We may also assume that $\int_{\mathbb{G}_\alpha^2} \rho^2 d\mathcal{H}_\alpha^2 < \infty$. We claim that $\widetilde{\rho}(u,v) = |x(u)|^{-\alpha} \rho(x(u),v)\chi_{\mathbb{R}^2 \setminus \widetilde{Y}}$ is admissible for $\widetilde{\Gamma}_r^0$. We must show that $\int_{\gamma} \widetilde{\rho}\, ds_E \geq 1$ for all $\gamma \in \widetilde{\Gamma}_r^0$. Let $\widetilde{\gamma}: [0,1] \rightarrow \mathbb{R}^2$ be a curve in $\widetilde{\Gamma}_r^0$. As usual, let $\gamma = \varphi^{-1} \circ \widetilde{\gamma}$ be the corresponding curve in $\Gamma$. Our main obstacle is that $\gamma$ may not be a rectifiable curve. 

To work around this, let $\epsilon>0$ and construct a new curve $\widetilde{\gamma}_\epsilon$ by redefining $\widetilde{\gamma}$ on certain intervals as follows. Consider first the case that $\widetilde{\gamma}(0) \in \widetilde{Y}$. Since $\int_{\mathbb{R}^2} \widetilde{\rho}^2 dm = \int_{\mathbb{G}_\alpha^2} \rho^2 d\mathcal{H}_\alpha^2< \infty$, we may apply Lemma \ref{lemm:path_redefine} to find a value $r>0$ sufficiently small so that the circle $S(\widetilde{\gamma}(0),r)$ intersects $E$ and a point $\widetilde{\gamma}(t_\epsilon) \notin \widetilde{Y}$ for some $t_\epsilon \in (0,1)$, and such that $\int_{S(\widetilde{\gamma}(0),r)} \widetilde{\rho}\, ds_E < \epsilon$. Define $\widetilde{\gamma}_\epsilon$ on the interval $[0, t_\epsilon]$ by traversing that portion of the circle from $\widetilde{\gamma}(0)$ to $\widetilde{\gamma}(t_\epsilon)$. Notice that $\int_{\widetilde{\gamma}_\epsilon|_{[0, t_\epsilon]}} \widetilde{\rho}\, ds_E < \epsilon$. Next, in the case that $\widetilde{\gamma}(1) \in \widetilde{Y}$, we will define $\widetilde{\gamma}_\epsilon$ on some interval $[t_\epsilon',1]$ in a similar way, so that $\widetilde{\gamma}_\epsilon(t_\epsilon') = \widetilde{\gamma}(t_\epsilon') \notin \widetilde{Y}$, $\widetilde{\gamma}_\epsilon(1) \in F$, and $\int_{\widetilde{\gamma}_\epsilon|_{[t_\epsilon',1]}} \widetilde{\rho}\, ds_E < \epsilon$. Notice that there are finitely many values $t_1, t_2, \ldots, t_n \in [t_\epsilon, t_{\epsilon'}]$ for which $\gamma(t_j) \in \widetilde{Y}$. Define $\widetilde{\gamma}_\epsilon$ in a similar manner on a small interval $I_j$ around each $t_j$ so that $\sum_{j=1}^n \int_{\widetilde{\gamma}_\epsilon|_{I_j}} \widetilde{\rho}\, ds_E < \epsilon$. Define $\widetilde{\gamma}_\epsilon$ for all other values $t$ by $\widetilde{\gamma}_\epsilon(t) = \widetilde{\gamma}(t)$. Let $J_\epsilon = \{t \in [0,1]: \widetilde{\gamma}_\epsilon(t) = \widetilde{\gamma}(t)\}$; observe that $\int_{\widetilde{\gamma}_\epsilon|_{[0,1] \setminus J_\epsilon}}\widetilde{\rho}\,ds_E < 3\epsilon$. We have illustrated the construction of the curve $\widetilde{\gamma}_\epsilon$ in Figure \ref{fig:redefined_curve}.

\begin{figure} \centering
\includegraphics[height=2.25in]{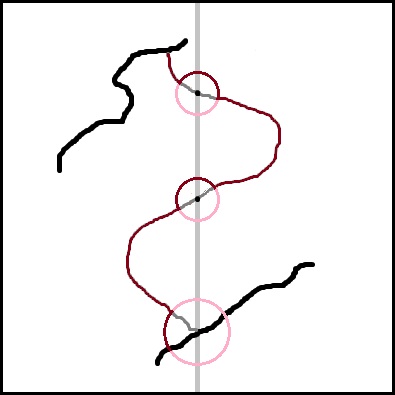}
\caption{The curve $\widetilde{\gamma}_\epsilon$ in the proof of Proposition \ref{prop:varphi_gqc} } \label{fig:redefined_curve}
\end{figure}

Observe that $\tilde{\gamma}_\epsilon$ only intersects the singular line along circles whose centers are on the singular line. Letting $\gamma_\epsilon = \varphi^{-1}\circ \widetilde{\gamma}_\epsilon$, it is easy to check that $\gamma_\epsilon$ is rectifiable and hence that $\int_{\gamma_\epsilon} \rho\, ds_E \geq 1$. By the change of variables calculation, it follows that $\int_{\widetilde{\gamma}_\epsilon} \widetilde{\rho}\, ds_E \geq 1$. This in turn implies that $\int_{\widetilde{\gamma}} \widetilde{\rho}\, ds_E \geq 1-3\epsilon$. Letting $\epsilon \rightarrow 0$, we conclude that $\int_{\widetilde{\gamma}} \widetilde{\rho}\, ds_E \geq 1$ for all $\widetilde{\gamma} \in \widetilde{\Gamma}_r^0$ and hence that $\widetilde{\rho}$ is admissible for $\widetilde{\Gamma}_r^0$. Thus $\Mod \widetilde{\Gamma}_r^0 = \Mod \widetilde{\Gamma} \leq \Mod \Gamma$, and the proof is complete. 
\end{proof}

We point out the following corollary to the preceding proposition. 

\begin{cor}
The $\alpha$-Grushin plane $\mathbb{G}_\alpha^2$ is a Loewner space with exponent 2. That is, for each $t>0$, there is a uniform positive lower bound on $\Mod \Gamma(E,F)$ (the family of curves connecting the disjoint continua $E$ and $F$) for all $E,F$ satisfying $$\frac{\dist(E,F)}{\min\{\diam(E),\diam(F)\}} \leq t.$$
\end{cor}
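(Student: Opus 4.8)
The plan is to transfer the Euclidean Loewner estimate to $\mathbb{G}_\alpha^2$ through the canonical quasisymmetry $\varphi$, using two inputs: that $\mathbb{R}^2$ equipped with Lebesgue measure is a $2$-Loewner space (a classical fact; see Heinonen \cite{hei:lectures}), and that by Proposition \ref{prop:varphi_gqc} the map $\varphi$ preserves the $2$-modulus of connecting curve families \emph{exactly}, that is, $\Mod \Gamma(E,F) = \Mod \Gamma(\varphi E, \varphi F)$ for all disjoint continua $E,F \subset \mathbb{G}_\alpha^2$. Write $\phi_2$ for the (decreasing, strictly positive) Euclidean Loewner function, and let $\eta$ be a control function witnessing the quasisymmetry of $\varphi$. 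Since $\varphi$ is only quasisymmetric and not bi-Lipschitz, the relative distances appearing in the Loewner condition are distorted, and the entire content of the argument is to bound this distortion in the correct direction.

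We first establish the relative-distance estimate. Fix disjoint continua $E, F \subset \mathbb{G}_\alpha^2$ with $\dist_\alpha(E,F)/\min\{\diam_\alpha E, \diam_\alpha F\} \leq t$, where the subscript $\alpha$ denotes Grushin quantities. Since continua are compact, we may choose $a \in E$ and $b \in F$ with $d_\alpha(a,b) = \dist_\alpha(E,F)$, and we may assume $\diam_\alpha E \leq \diam_\alpha F$. Using that every point of a set has another point of that set at least half the diameter away, we pick $c \in E$ with $d_\alpha(a,c) \geq \tfrac12 \diam_\alpha E$ and $c' \in F$ with $d_\alpha(b,c') \geq \tfrac12 \diam_\alpha F$. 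Then $d_\alpha(a,b)/d_\alpha(a,c) \leq 2t$ and $d_\alpha(a,b)/d_\alpha(b,c') \leq 2t$, so applying the quasisymmetry inequality to the triples centered at $a$ and at $b$ yields $d_E(\varphi a, \varphi b) \leq \eta(2t)\,\diam_E \varphi E$ and $d_E(\varphi a, \varphi b) \leq \eta(2t)\,\diam_E \varphi F$. Consequently $\dist_E(\varphi E, \varphi F) \leq d_E(\varphi a, \varphi b) \leq \eta(2t)\min\{\diam_E \varphi E, \diam_E \varphi F\}$, so the Euclidean relative distance of $\varphi E$ and $\varphi F$ is at most $\eta(2t)$.

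It then follows that $\Mod \Gamma(E,F) = \Mod \Gamma(\varphi E, \varphi F) \geq \phi_2(\eta(2t)) > 0$, where the first equality is Proposition \ref{prop:varphi_gqc}, the inequality is the Euclidean Loewner property applied to $\varphi E, \varphi F$ together with the monotonicity of $\phi_2$, and positivity is again the Loewner property of $\mathbb{R}^2$. This exhibits $\mathbb{G}_\alpha^2$ as a $2$-Loewner space with Loewner function $t \mapsto \phi_2(\eta(2t))$, which is decreasing and strictly positive as required. The only delicate point is the relative-distance estimate of the middle paragraph: one must obtain the distortion bound in the right direction and expressed in terms of the \emph{smaller} of the two image diameters, which is why both endpoints $a$ and $b$ are played off against auxiliary far points $c, c'$. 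Everything else is a direct quotation of the Euclidean Loewner property and of Proposition \ref{prop:varphi_gqc}.
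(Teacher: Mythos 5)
Your proof is correct and follows exactly the paper's own (much terser) argument: the paper simply notes that the corollary ``follows from the Loewner property for Euclidean space, since the Loewner property is preserved under a quasisymmetric map which is geometrically conformal.'' Your write-up supplies the details of that one-line claim --- the relative-distance distortion bound via the quasisymmetry control function $\eta$ and the exact modulus transfer via Proposition \ref{prop:varphi_gqc} --- and both steps are carried out correctly.
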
 

This follows from the Loewner property for Euclidean space, since the Loewner property is preserved under a quasisymmetric map which is geometrically conformal.

The implication (a) $\Longrightarrow$ (d) is now immediate.

\begin{prop}\label{prop:d_implies_a}
Let $f: \mathbb{G}_\alpha^2 \rightarrow \mathbb{G}_\alpha^2$ be a quasisymmetric homeomorphism. Then $f$ is geometrically quasiconformal.
\end{prop}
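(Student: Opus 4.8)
The plan is to reduce the statement about an arbitrary quasisymmetric $f : \mathbb{G}_\alpha^2 \to \mathbb{G}_\alpha^2$ to two facts already established: the canonical quasisymmetry $\varphi$ is geometrically conformal (Proposition \ref{prop:varphi_gqc}), and the equivalence of quasisymmetry and geometric quasiconformality in the Euclidean plane. The key is to conjugate by $\varphi$ and pass to the Euclidean side, where the full strength of the classical theory is available. Concretely, I would set $\widetilde f = \varphi \circ f \circ \varphi^{-1} : \mathbb{R}^2 \to \mathbb{R}^2$.

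First I would observe that $\widetilde f$ is a self-homeomorphism of the Euclidean plane and that it is quasisymmetric: indeed $\varphi$ and $\varphi^{-1}$ are quasisymmetric by Meyerson's theorem, $f$ is quasisymmetric by hypothesis, and quasisymmetry is preserved under composition (with the control function $\eta$ for the composite computable from those of the factors, which accounts for the quantitative nature of the conclusion). By the classical equivalence of definitions for homeomorphisms of $\mathbb{R}^n$ recorded in Section \ref{sec:definitions} (taking $p=2$), a quasisymmetric self-homeomorphism of $\mathbb{R}^2$ is geometrically quasiconformal; hence $\widetilde f$ satisfies the two-sided modulus inequality \eqref{equ:geometric_qc} for all pairs of disjoint continua in $\mathbb{R}^2$, with some constant $\widetilde K$.

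Next I would transfer this modulus estimate back to the Grushin plane. Given disjoint continua $E, F \subset \mathbb{G}_\alpha^2$ with associated curve family $\Gamma = \Gamma(E,F)$, apply $\varphi$ to obtain disjoint continua $\varphi(E), \varphi(F) \subset \mathbb{R}^2$ and the family $\varphi\Gamma$. By Proposition \ref{prop:varphi_gqc}, $\varphi$ preserves the $2$-modulus of such families exactly, so $\Mod \Gamma = \Mod \varphi\Gamma$. Likewise $\Mod f\Gamma = \Mod \varphi f\Gamma$. Since $\varphi f \Gamma = \widetilde f (\varphi\Gamma)$ and $\widetilde f$ is geometrically quasiconformal, we get
$$
\widetilde K^{-1}\Mod \varphi\Gamma \leq \Mod \widetilde f(\varphi\Gamma) \leq \widetilde K \Mod \varphi\Gamma.
$$
Rewriting each side via the two equalities above yields $\widetilde K^{-1} \Mod \Gamma \leq \Mod f\Gamma \leq \widetilde K \Mod \Gamma$, which is exactly the geometric quasiconformality inequality for $f$ with constant $K = \widetilde K$.

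The point to be careful about, rather than a genuine obstacle, is that Proposition \ref{prop:varphi_gqc} establishes the modulus equality for $\varphi$ only on disjoint continua (the ring definition), so I must keep the argument strictly within families of the form $\Gamma(E,F)$ and not appeal to modulus equality for arbitrary curve families, which the paper emphasizes can fail in the Grushin setting. As long as I apply Proposition \ref{prop:varphi_gqc} to the two specific pairs $(E,F)$ and $(f(E),f(F))$, each of which is a pair of disjoint continua, this is automatic and the argument goes through. The whole proof is therefore a short diagram-chase, which is why the text can describe it as immediate; the real content lies in the two inputs already proved.
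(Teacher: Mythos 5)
Your proposal is correct and is essentially the paper's own argument: conjugate by $\varphi$ to get $\widetilde f = \varphi \circ f \circ \varphi^{-1}$, invoke the Euclidean equivalence of quasisymmetry and geometric quasiconformality, and then use Proposition \ref{prop:varphi_gqc} to transfer the modulus inequalities back, which is exactly the paper's one-line ``composition of geometrically quasiconformal maps'' step unpacked. Your explicit care that $\varphi$ is only applied to families of the form $\Gamma(E,F)$ (noting that $f\Gamma(E,F) = \Gamma(f(E),f(F))$ is again such a family) is a worthwhile clarification of why that composition argument is legitimate under the ring definition, but it is not a different route.
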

\begin{proof}
The conjugated map $\widetilde{f} = \varphi \circ f \circ \varphi^{-1}: \mathbb{R}^2 \rightarrow \mathbb{R}^2$ is quasisymmetric and hence geometrically quasiconformal. Hence $f = \varphi^{-1} \circ \widetilde{f} \circ \varphi$ is the composition of geometrically quasiconformal maps and thus geometrically quasiconformal. 
\end{proof}

\begin{rem}\label{rem:quantitative}
Theorem \ref{thm:main_theorem} can be made quantitative by following the above proofs and using the quantitative nature of the equivalence of definitions of Euclidean quasiconformal mappings. 
\begin{itemize}
\item[(1)] As is true in general for quasisymmetric maps between metric spaces, if $f$ is $\eta$-quasisymmetric, then $f$ is $H$-metrically quasiconformal with $H = \eta(1)$.
\item[(2)] If $f$ is $H$-metrically quasiconformal on its Riemannian set, then $f$ is locally $K$-geometrically quasiconformal on its Riemannian set, in fact with $K=H$. This can be seen from \cite[Remark 34.2]{Vais1} and the fact that $\varphi$ is geometrically conformal. In fact, $f$ is also $K$-geometrically quasiconformal (without restriction to its Riemannian set) since the constant $K$ is not increased when applying \cite[Theorem 35.1]{Vais1} as we did when proving Proposition \ref{thm:qs_equivalence}.
\item[(3)] If $f$ is $K$-geometrically quasiconformal, then $f$ is $\eta$-quasisymmetric with $\eta$ determined as follows. First, 
it is known that a planar homeomorphism $\widetilde{f}: \mathbb{R}^2 \rightarrow \mathbb{R}^2$ which is $K$-geometrically quasiconformal is $\eta_0$-quasisymmetric for $\eta_0(t) = C(K)\max\{t^{K},t^{1/K}\}$, 
where we may take $C(K) = 4^{K-1} \exp(6(K+1)^2\sqrt{K-1})$. This bound is due to Vuorinen \cite[Theorem 1.8]{Vuo:90}; see also earlier work of Lehto, Virtanen, and V\"ais\"al\"a \cite{LVV:59}.  
Next, let $\eta_1$ denote the control function for the quasisymmetric map $\varphi$ and $\eta_1'$ denote the control function for the quasisymmetric map $\varphi^{-1}$. Then $f$ is $\eta$-quasisymmetric for $\eta = \eta_1'\circ \eta_0 \circ \eta_1$.
\item[(4)] If $f$ is $K$-geometrically quasiconformal, then $f$ satisfies the analytic definition in (f) with $\|\mu\|_\infty = (K-1)/(K+1)$. The converse also holds. This follows from a standard fact for Euclidean quasiconformal mappings, applied to the conjugated map $\widetilde{f}: \mathbb{R}^2 \rightarrow \mathbb{R}^2$.  
\end{itemize}
\end{rem} 

\section{Characterization of conformal maps} \label{sec:conformal_maps}

Ackermann \cite{Ack} raises the question of characterizing all orientation-preserving homeomorphisms of the Grushin plane which are metrically conformal, that is, metrically quasiconformal with $H=1$. She considers three classes of maps (originating in work of Payne \cite{Payne}), which she shows to be conformal: dilations, vertical translations, and rotation by a half revolution. We can verify that in fact all conformal homeomorphisms of the Grushin plane must be the composition of maps of these types. This a rigidity theorem similar to (and deriving from) the well-known classification of conformal homeomorphisms of the Euclidean plane. A similar rigidity result, using more sophisticated arguments, is found in Morbidelli \cite{Mor:09} for maps between domains in certain higher-dimensional Grushin spaces, 
though the case of the Grushin plane is not explicitly addressed there. 

\begin{thm} \label{thm:conformal_maps}
Let $f: \mathbb{G}_\alpha^2 \rightarrow \mathbb{G}_\alpha^2$ $(\alpha>0)$ be an orientation-preserving homeomorphism which is metrically conformal. Then there exists $\lambda>0, a\in \mathbb{R}$ 
such that $f(x,y) = (\pm \lambda x,\pm \lambda^{1+\alpha}y + a)$. 
\end{thm}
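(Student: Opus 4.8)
The plan is to transfer the problem to the Euclidean plane through the canonical quasisymmetry $\varphi$ and then invoke the rigidity of conformal self-maps of $\mathbb{R}^2$. Set $\widetilde{f} = \varphi \circ f \circ \varphi^{-1} \colon \mathbb{R}^2 \to \mathbb{R}^2$. Since $f$ is metrically conformal (that is, $H=1$) and $\varphi$ is a conformal map between Riemannian manifolds away from the singular line, the conjugated map $\widetilde{f}$ is metrically conformal, i.e.\ infinitesimally $1$-quasiconformal, on $\mathbb{R}^2 \setminus (\widetilde{Y} \cup \widetilde{f}^{-1}(\widetilde{Y}))$, because $f$ carries Riemannian points to Riemannian points there and both $\varphi$ and $\varphi^{-1}$ are conformal Riemannian diffeomorphisms on that set. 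Running precisely the negligibility argument of Proposition \ref{thm:qs_equivalence}, but now retaining the exact constant $H=1$ — first for $\widetilde{f}$ on the two components of $\mathbb{R}^2 \setminus \widetilde{f}^{-1}(\widetilde{Y})$, then for $\widetilde{f}^{-1}$ on $\mathbb{R}^2 \setminus \widetilde{Y}$, using that $\widetilde{Y}$ has $\sigma$-finite $1$-dimensional Lebesgue measure — upgrades $\widetilde{f}$ to a $1$-quasiconformal self-homeomorphism of all of $\mathbb{R}^2$. An orientation-preserving $1$-quasiconformal homeomorphism of the plane is conformal, hence an affine automorphism $\widetilde{f}(z) = az + b$ with $a \in \mathbb{C}\setminus\{0\}$ and $b \in \mathbb{C}$.

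The decisive step is to show that $f$ maps the singular line $Y$ onto itself, equivalently that the affine map $\widetilde{f}$ preserves the imaginary axis $\widetilde{Y}$ setwise; I expect this to be the main obstacle. Here the full force of metric conformality must be used, namely the requirement $H \le 1$ at \emph{every} point, including the singular ones. The mechanism is that singular points and Riemannian points are infinitesimally distinguishable. Using the dilation property (\ref{equ:grushin_dilation}), the $\varphi$-image of a small Grushin sphere centered at a point of $Y$ is comparable to, but not equal to, a round Euclidean circle; by self-similarity its eccentricity is a fixed constant $\kappa = \kappa(\alpha) > 1$ independent of the radius. Thus if $\widetilde{f}(\widetilde{Y}) \neq \widetilde{Y}$, then some point of $Y$ is sent to a Riemannian point (or, applying the same reasoning to $f^{-1}$, some Riemannian point is sent to a point of $Y$). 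Tracking a shrinking family of spheres through $\varphi$, then through the shape-preserving conformal map $\widetilde{f}$, and finally through $\varphi^{-1}$ — a conformal Riemannian diffeomorphism near the relevant Riemannian point — one finds that the infinitesimal distortion of $f$ at that point equals $\kappa > 1$, contradicting $H=1$. The routine computation to be supplied is the verification that the $\varphi$-image of a Grushin sphere about a singular point is genuinely non-circular, so that $\kappa > 1$ for every $\alpha > 0$.

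Once $\widetilde{f}(\widetilde{Y}) = \widetilde{Y}$ is known, the affine map is pinned down. Writing $z = u + iv$, $a = a_1 + ia_2$, and $b = b_1 + ib_2$, the condition $\widetilde{f}(i\mathbb{R}) = i\mathbb{R}$ forces $a_2 = 0$ and $b_1 = 0$, so that $\widetilde{f}(u,v) = (a_1 u,\, a_1 v + b_2)$ with $a_1 \in \mathbb{R}\setminus\{0\}$ and $b_2 \in \mathbb{R}$. It then remains to conjugate back: using $\varphi(x,y) = ((1+\alpha)^{-1}\sgn(x)|x|^{1+\alpha},\, y)$ together with its inverse, a direct substitution in $f = \varphi^{-1}\circ \widetilde{f}\circ \varphi$ yields, with $\lambda := |a_1|^{1/(1+\alpha)} > 0$ and $a := b_2$,
$$f(x,y) = \left(\sgn(a_1)\,\lambda x,\; \sgn(a_1)\,\lambda^{1+\alpha} y + a\right),$$
which is exactly the asserted form $f(x,y) = (\pm\lambda x, \pm\lambda^{1+\alpha}y + a)$ with the two signs equal, and which matches the decomposition into dilations, vertical translations, and the half-revolution rotation. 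The only content genuinely beyond the Euclidean theory and the bookkeeping of the conjugation is the infinitesimal separation of singular and Riemannian points carried out in the second paragraph.
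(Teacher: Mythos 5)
Your proposal is correct and follows essentially the same route as the paper: conjugate by $\varphi$, use the negligibility argument of Proposition \ref{thm:qs_equivalence} to upgrade $\widetilde{f}$ to a conformal (hence affine) self-map of $\mathbb{R}^2$, and then rule out $\widetilde{f}(\widetilde{Y}) \neq \widetilde{Y}$ by observing that small metric balls centered at singular points have a fixed, scale-independent Euclidean eccentricity $\kappa(\alpha) > 1$ in $\varphi$-coordinates while balls at Riemannian points are asymptotically round --- which is exactly the paper's dilatation-preservation argument in the conformal Grushin plane. The one computation you flag as "to be supplied" (that conformal-Grushin spheres about singular points are genuinely non-circular) is asserted at the same level of detail in the paper's proof, and your explicit back-conjugation pinning down the coupled signs is if anything more complete than the paper's closing "the result follows."
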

\begin{proof}
Define $\widetilde{f}: \mathbb{R}^2 \rightarrow \mathbb{R}^2$ by $\widetilde{f} = \varphi \circ f \circ \varphi^{-1}$. Note that $\widetilde{f}$ is a conformal mapping on its Riemannian set. By the argument of Proposition \ref{thm:qs_equivalence}, we conclude that $\widetilde{f}$ is in fact conformal on all of $\mathbb{R}^2$. It follows that 
$$\widetilde{f}(u,v) = (a_1 + \lambda_0((\cos \theta) u - (\sin \theta)v), a_2 + \lambda_0((\sin \theta) u + (\cos \theta)v))$$ 
for some $a_1,a_2 \in \mathbb{R}, \lambda_0>0, \theta \in [0, 2\pi)$. 
The theorem will follow by showing $\theta \in \{0, \pi\}$ and $a_1 = 0$. This in turn will follow from showing that $f$ maps the singular line $Y = \{0\} \times \mathbb{R}$ onto itself. 

An explicit formula for $f$ obtained by conjugating $\widetilde{f}$ by $\varphi$ is fairly complicated. However, we can work with $\widetilde{f}$ directly by considering instead the ``conformal'' definition of the Grushin plane rather than the standard definition. This is obtained by pushing the Grushin line element forward under the canonical quasisymmetry $\varphi$; see \cite[Definition 3.1]{Rom} for details. In the conformal Grushin plane, all balls centered on the singular line are self-similar (relative to the Euclidean metric) by the corresponding version of the dilation property (\ref{equ:grushin_dilation}) and have (Euclidean) dilatation strictly larger than 1. 
Since $\widetilde{f}$ is a similarity map, it preserves the Euclidean dilatation of metric balls; this implies that $\widetilde{f}$ cannot map a point on the singular line to a point off the singular line, as small balls in the conformal Grushin metric which are off the singular line have roughly Euclidean shape. We conclude that $\widetilde{f}$ and hence $f$ maps the singular line onto itself, and the result follows. 
\end{proof}

The situation here becomes somewhat more complicated if one considers conformal homeomorphisms $f: D \rightarrow D'$ between two domains $D, D' \subset \mathbb{G}_\alpha^2$, since we no longer have the same simple description of (Euclidean) conformal maps between $\varphi(D)$ and $\varphi(D')$. However, it is easy to see that the same basic conclusion that $f$ must map $Y \cap D$ into $Y$ still holds, since locally a (Euclidean) conformal map is close to a similarity map. Conversely, for any conformal map $\widetilde{f}: \varphi(D) \rightarrow \varphi(D')$ which maps $\widetilde{Y} \cap \varphi(D)$ into $\widetilde{Y}$, the corresponding map $f: D \rightarrow D'$ in the Grushin plane is still conformal. 

In a similar vein we can record the following fact.

\begin{prop}
There exists $H_0 = H_0(\alpha) >1$ such that every metrically quasiconformal homeomorphism $f: \mathbb{G}_\alpha^2 \rightarrow \mathbb{R}^2$ or $f: \mathbb{R}^2 \rightarrow \mathbb{G}_\alpha^2$ satisfies $H \geq H_0$. In particular, no such homeomorphism exists which is metrically conformal. 
\end{prop}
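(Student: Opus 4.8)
The plan is to reduce both cases to a single computation of the metric dilatation at a point of the singular line, where the obstruction is the fixed, non-Euclidean shape of small Grushin balls centered on $Y$. First I would record the mechanism used throughout: conjugation by the canonical quasisymmetry $\varphi$, which is conformal off the singular line, converts a metrically quasiconformal map into a genuine \emph{Euclidean} quasiconformal map. Concretely, for $f:\mathbb{G}_\alpha^2\to\mathbb{R}^2$ that is $H$-metrically quasiconformal, the conjugate $F=f\circ\varphi^{-1}:\mathbb{R}^2\to\mathbb{R}^2$ is $H$-metrically quasiconformal off the vertical axis $\widetilde Y$, hence on all of $\mathbb{R}^2$ by V\"ais\"al\"a's negligibility theorem \cite[Theorem 35.1]{Vais1} (exactly as in the proof of Proposition \ref{thm:qs_equivalence}), and therefore $\eta_H$-quasisymmetric with a control function $\eta_H$ depending only on $H$ and satisfying $\eta_H\to\mathrm{id}$ as $H\to 1$ (combine the Euclidean equivalence of definitions with the Vuorinen bound recorded in Remark \ref{rem:quantitative}(3), noting $\eta_1(t)=t$).

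The key geometric input is the fact already exploited in the proof of Theorem \ref{thm:conformal_maps}: passing to the conformal Grushin plane $(\mathbb{R}^2,d^{\mathrm{conf}}_\alpha)$ obtained by pushing the Grushin line element forward under $\varphi$, every ball $B^{\mathrm{conf}}(\tilde p,r)$ centered at a point $\tilde p\in\widetilde Y$ is self-similar relative to the Euclidean metric, with a fixed Euclidean dilatation $D_0=D_0(\alpha)>1$ (the same at every singular point and radius, by the dilation property (\ref{equ:grushin_dilation}) and vertical translation invariance). Thus on the sphere $\partial B^{\mathrm{conf}}(\tilde p,r)$ there is a Euclidean-nearest point $z_{\mathrm{in}}$, at Euclidean distance $a(r)$ from $\tilde p$, and a Euclidean-farthest point $z_{\mathrm{out}}$, at Euclidean distance $D_0\,a(r)$. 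Since $\varphi$ is an isometry from $\mathbb{G}_\alpha^2$ onto $(\mathbb{R}^2,d^{\mathrm{conf}}_\alpha)$, for a singular point $p\in Y$ with $\tilde p=\varphi(p)$ the Grushin sphere of radius $r$ about $p$ is carried to $\partial B^{\mathrm{conf}}(\tilde p,r)$, and $f=F\circ\varphi$. Writing $H_f(p)$ for the $\limsup$ appearing in the definition of metric quasiconformality at $p$ (so that $H_f(p)\le H$), and testing on $z_{\mathrm{in}},z_{\mathrm{out}}$ together with the quasisymmetry of $F$ gives
\[
H \ \ge\ H_f(p)\ \ge\ \limsup_{r\to 0}\frac{|F(z_{\mathrm{out}})-F(\tilde p)|}{|F(z_{\mathrm{in}})-F(\tilde p)|}\ \ge\ \frac{1}{\eta_H(1/D_0)}.
\]
Because $\eta_H(1/D_0)\to 1/D_0$ as $H\to 1$, the right-hand side tends to $D_0>1$, so no $H$ close to $1$ can satisfy this inequality. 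Defining $H_0=H_0(\alpha)$ to be the least $H\ge 1$ with $H\ge 1/\eta_H(1/D_0)$ — which exists and exceeds $1$ by continuity, since the inequality fails at $H=1$ — yields the bound for maps $\mathbb{G}_\alpha^2\to\mathbb{R}^2$.

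The case $f:\mathbb{R}^2\to\mathbb{G}_\alpha^2$ is handled symmetrically: one instead sets $G=\varphi\circ f$, again Euclidean $H$-metrically quasiconformal and $\eta_H$-quasisymmetric, and computes the metric dilatation of $f=\varphi^{-1}\circ G$ at a point $p\in f^{-1}(Y)$ via $d_\alpha(f(p),f(q))=d^{\mathrm{conf}}_\alpha(G(p),G(q))$; the same self-similarity of conformal balls about the singular point $G(p)$ forces the dilatation past a quantity tending to $D_0>1$ as $H\to 1$. (Alternatively, since such an $f$ is quasisymmetric with control determined by $H$, one may apply the first case to $f^{-1}$.) The final statement that no metrically conformal such homeomorphism exists is then immediate, since metric conformality means $H=1<H_0$. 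I expect the main obstacle to be precisely this second direction: there the extremal points lie on the $G$-image of a Euclidean circle rather than on an exact conformal sphere, so one must combine the bounded Euclidean roundness of $G(\partial B_E(p,r))$ (from quasisymmetry) with the direction-dependent comparison between $d^{\mathrm{conf}}_\alpha$ and $d_E$ near $\widetilde Y$ to extract a clean lower bound that still degenerates to $D_0$ as $H\to 1$.
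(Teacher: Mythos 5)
Your proposal is correct and follows essentially the same route as the paper's proof: conjugate by the canonical quasisymmetry $\varphi$, use V\"ais\"al\"a's negligibility theorem \cite[Theorem 35.1]{Vais1} (as in Proposition \ref{thm:qs_equivalence}) to get a Euclidean quasisymmetric map with control $\eta_H \to \mathrm{id}$ as $H \to 1$, and derive a contradiction from the fixed Euclidean dilatation $D_0(\alpha)>1$ of conformal Grushin balls centered on the singular line (the paper does the case $\mathbb{R}^2 \to \mathbb{G}_\alpha^2$ and defers this last step to the argument of Theorem \ref{thm:conformal_maps}, whereas you carry out both directions explicitly and quantitatively). One caveat: your parenthetical alternative for the second case---applying the first case to $f^{-1}$---does not actually work, since the quasisymmetry control of $f=\varphi^{-1}\circ G$, and hence the metric dilatation constant of $f^{-1}$, incorporates the \emph{fixed} distortion of $\varphi^{-1}$ and therefore does not tend to $1$ as $H \to 1$, so the conclusion that $f^{-1}$'s constant exceeds $H_0$ places no constraint on $H$; your direct argument for that case (roundness of $G(\partial B_E(p,r))$ versus the direction-dependence of $d_\alpha^{\mathrm{conf}}$ at $\widetilde{Y}$) is the sound one and is what the paper intends.
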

\begin{proof}
For simplicity, we consider the case of a mapping $f: \mathbb{R}^2 \rightarrow \mathbb{G}_\alpha^2$. Assume that $f$ is metrically quasiconformal with constant $H$. The composed map $\varphi \circ f: \mathbb{R}^2 \rightarrow \mathbb{R}^2$ is then metrically quasiconformal on the set $\mathbb{R}^2 \setminus Y$ with the same constant $H$. Hence $\varphi \circ f$ is geometrically quasiconformal (with $K = H$) on all of $\mathbb{R}^2$ by the argument in Proposition \ref{thm:qs_equivalence}. From this it follows that $\varphi \circ f$ is $\eta$-quasisymmetric, with $\eta$ depending on $H$. Observe from Remark \ref{rem:quantitative} (3) that we may choose $\eta$ so that $\eta(1) \rightarrow 1$ as $H \rightarrow 1$. As in the proof of Theorem \ref{thm:conformal_maps}, this leads to a contradiction when $\eta(1)$ is sufficiently close to 1. The existence of such a constant $H_0(\alpha)$ follows.
\end{proof}

\section{A family of curves}\label{sec:exm}

We conclude the paper with an illustrative example. It is well-known that the $n$-modulus of the family of all nonrectifiable curves in $\mathbb{R}^n$ is zero. In contrast, we have the following result in the Grushin plane. 

\begin{thm}\label{thm:nonrectifiable_family}
For $\alpha \geq 1$, there exists a family $\Gamma$ of nonrectifiable curves in $\mathbb{G}_\alpha^2$ with $\Mod \Gamma > 0$. 
\end{thm}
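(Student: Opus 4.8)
The plan is to construct an explicit family $\Gamma$ of nonrectifiable curves, each of which intersects the singular line $Y$ in a set of positive Grushin Hausdorff $1$-measure, and then exhibit a finite upper bound on any admissible density integral to force $\Mod\Gamma > 0$. The key input is the sharpness remark following Proposition \ref{prop:hausdorff_intersection}: for $\alpha \geq 1$ one can build, via the Cantor-set construction described there, a single curve $\gamma$ meeting $Y$ in a set of Grushin Hausdorff dimension $1$. The heuristic is that nonrectifiability and positive modulus are compatible precisely because the measure $\mathcal{H}_\alpha^2$ degenerates on $Y$: a curve can ``spend'' positive $1$-measure on $Y$ while the ambient $2$-measure sees $Y$ as negligible, so admissible densities cannot be made cheap there.

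First I would set up the geometry. Working in a bounded region, say with $Y$-intersections confined to $\{0\}\times[0,1]$, I would take a one-parameter family obtained by translating (and possibly snowflake-rescaling, via the dilation property \eqref{equ:grushin_dilation}) the Cantor-type curve of the sharpness remark, so that the family sweeps out a set of positive $\mathcal{H}_\alpha^2$-measure in $\mathbb{G}_\alpha^2$. Each member is nonrectifiable: one chooses the Cantor parameter $L$ at the critical or supercritical threshold $L \geq 1/2^{1+\alpha}$ (equivalently $s \geq 1$), where the length sum $\sum_n 2^{n-1}(L^{n-1}(1-2L))^{1/(1+\alpha)}$ diverges by the ratio test, rather than in the convergent regime used to produce rectifiable examples. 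This is exactly where $\alpha \geq 1$ is needed, so that the divergent regime still yields a genuine curve meeting $Y$ substantially.

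Next I would estimate $\Mod\Gamma$ from below by producing a lower bound valid for every admissible $\rho$. The most direct route is to pass to the Euclidean side through the conformal change of variables established before Proposition \ref{prop:varphi_gqc}: under $\varphi$, a density $\rho$ on $\mathbb{G}_\alpha^2$ with $\rho|_Y = 0$ corresponds to $\widetilde\rho$ on $\mathbb{R}^2$ with $\int_{\mathbb{G}_\alpha^2}\rho^2\,d\mathcal{H}_\alpha^2 = \int_{\mathbb{R}^2}\widetilde\rho^2\,dm$ and matching line integrals. However, the $Y$-portion of each curve is precisely what resists this reduction, and that is where the argument must be self-contained: on $Y$, the length element is the snowflake element, so the admissibility constraint $\int_\gamma \rho\,ds_\alpha \geq 1$ imposes a genuine cost that is not captured by the Euclidean picture. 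I would therefore estimate the contribution of the $Y$-portions directly using that $\mathcal{H}_\alpha^2\mres Y = C\,\mathcal{H}_E^{2/(1+\alpha)}$, Fubini over the translation parameter, and Hölder's inequality to bound $\int\rho^2$ from below in terms of a fixed positive constant.

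The main obstacle will be the second half: producing the uniform lower bound on $\int\rho^2\,d\mathcal{H}_\alpha^2$ over all admissible $\rho$, since the family consists of nonrectifiable curves for which the Euclidean ``length-times-Hölder'' argument breaks down. The resolution is that the admissibility is tested only along the $Y$-traces of the curves, where the metric is the controlled snowflake metric and the foliation by the translation parameter covers a set of positive $\mathcal{H}_\alpha^2$-measure. Concretely, restricting $\rho$ to this positive-measure set and integrating the constraint $\int_{\gamma\cap Y}\rho\,ds_\alpha \geq 1$ against the translation parameter, then applying Fubini and Hölder exactly as in Lemma \ref{lemm:path_redefine}, yields $\int\rho^2\,d\mathcal{H}_\alpha^2 \geq c > 0$ for a constant $c$ independent of $\rho$. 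Taking the infimum gives $\Mod\Gamma \geq c > 0$, completing the argument.
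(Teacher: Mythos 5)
Your construction cannot give positive modulus, and the obstruction is built into the definition of modulus itself. Admissibility of a density $\rho$ is tested only on \emph{locally rectifiable} curves, and for curves defined on compact intervals local rectifiability coincides with rectifiability (as noted in Section \ref{sec:definitions}). The Cantor-type curves from the remark after Proposition \ref{prop:hausdorff_intersection}, taken in the supercritical regime $L \geq 1/2^{1+\alpha}$, are defined on $[0,1]$ and have infinite length, so they are not locally rectifiable; consequently $\rho \equiv 0$ is admissible for any family made up of such curves or their translates, and the modulus of your family $\Gamma$ is $0$, not positive. This is not a repairable technicality: by Proposition \ref{prop:hausdorff_intersection}, \emph{any} curve meeting $Y$ in a set of positive $\mathcal{H}_\alpha^1$-measure is automatically nonrectifiable, so your plan of forcing admissible densities to ``pay'' along the $Y$-traces can never be implemented --- the admissibility constraint simply never sees such curves. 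The paper makes exactly this point in Corollary \ref{cor:failure}(1): the family $\Gamma'$ obtained by adding the endpoint on $Y$ to the curves of Theorem \ref{thm:nonrectifiable_family} consists only of non-locally-rectifiable curves and therefore has modulus zero. Two smaller errors compound this. Your heuristic that ``the ambient 2-measure sees $Y$ as negligible'' is false for $\alpha \geq 1$: segments of $Y$ have positive (indeed, for $\alpha > 1$, infinite) $\mathcal{H}_\alpha^2$-measure, which is precisely what Corollary \ref{cor:failure}(3) exploits. And $\alpha \geq 1$ is not what makes the supercritical Cantor regime available; that regime exists for every $\alpha > 0$.

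The paper's proof threads the needle differently: its curves never touch $Y$ at all. They are defined on the half-open interval $(0,1/2]$ and approach the singular line only asymptotically, so every compact subcurve has finite length --- the curves are locally rectifiable, and admissibility genuinely constrains them --- while the total Grushin length diverges because the vertical line element $dy/|x|^{\alpha}$ blows up as $x \to 0$. The positive lower bound on the modulus is then a foliation argument (Fubini over the translation parameter plus H\"older, in the spirit of your last paragraph, but carried out on the Euclidean side via the change of variables preceding Proposition \ref{prop:varphi_gqc}); the hypothesis $\alpha \geq 1$ enters precisely to make the weight integral $\int_0^{1/2} t^{-2+\alpha}(\log t)^{-2}\,dt$ converge, not to produce nonrectifiability, which holds for all $\alpha > 0$.
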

\begin{proof}
We continue with the notation in Section \ref{sec:gqc}: we use $(x,y)$ for coordinates in $\mathbb{G}_\alpha^2$ and $(u,v)$ for coordinates in $\mathbb{R}^2$, and the use of $\text{ }\widetilde{ }\text{ }$ will indicate objects in $\mathbb{R}^2$.  

Consider the family $\widetilde{\Gamma}$ of curves $\widetilde{\gamma}_a(t) = (t^{1+\alpha}/(1+\alpha), -t^\alpha/\log t + a): (0, 1/2] \rightarrow (\mathbb{R}^2,d_E)$, where $0 \leq a \leq 1$. We will show that this family has positive modulus. This has the same modulus as the family $\Gamma$ consisting of the curves $\gamma_a = \varphi^{-1} \circ \widetilde{\gamma}_a$. 

Let $D$ be the domain in $\mathbb{R}^2$ which is foliated by the curves in $\widetilde{\Gamma}$. In coordinates
$$\left\{ \begin{array}{rl} u & = \frac{t^{1+\alpha}}{1+\alpha} \\ v & = \frac{-t^\alpha}{\log t} + a \end{array} \right. \hspace{.5in} \left\{ \begin{array}{rl} t & = ((1+\alpha)u)^{1/(1+\alpha)} \\ a & = v + \frac{((1+\alpha)u)^{\alpha/(1+\alpha)}}{\log ((1+\alpha)u)^{1/(1+\alpha)}} \end{array} \right. . $$
Write $\widetilde{\gamma}(a,t)$ in place of $\widetilde{\gamma}_a(t)$, observing that $|J _{\widetilde{\gamma}}(a,t)| = t^\alpha$ and $|J_{ \widetilde{\gamma}^{-1}}(u,v)| = ((1+\alpha)u)^{-\alpha/(1+\alpha)}$. 

Let $\widetilde{\rho}$ be an admissible function for $\widetilde{\Gamma}$, which we may assume to be supported on $D$. This means that 
$\int_{\widetilde{\gamma}_a} \widetilde{\rho}\, ds \geq 1$
for all $a \in [0,1]$. This can be written as
\begin{align*}
1 \leq \int_{\widetilde{\gamma}_a} \widetilde{\rho}\, ds_E & = \int_0^{1/2} (\widetilde{\rho} \circ \widetilde{\gamma}_a) \sqrt{\left(\frac{du}{dt}\right)^2 + \left(\frac{dv}{dt}\right)^2} dt \\
& = \int_0^{1/2} (\widetilde{\rho} \circ \widetilde{\gamma}_a) \sqrt{t^{2\alpha} + \left(\frac{t^{-1+\alpha}}{\log^2 t} - \frac{\alpha\, t^{-1+\alpha}}{\log t} \right)^2} dt.
\end{align*}
Then 
\begin{align*}
1 & \leq  \int_0^1 \int_0^{1/2}  (\widetilde{\rho} \circ \widetilde{\gamma}_a) \sqrt{t^{2\alpha} + \left(\frac{t^{-1+\alpha}}{\log^2 t} - \frac{\alpha\, t^{-1+\alpha}}{\log t}\right)^2}\, dt\,da \\
 & = \int_D \widetilde{\rho} \sqrt{((1+\alpha)u)^{2\alpha/(1+\alpha)} + \left(\frac{t(u)^{-1+\alpha}}{\log^2 t(u)} - \frac{\alpha\, t(u)^{-1+\alpha}}{\log t(u)}\right)^2} \frac{1}{((1+\alpha)u)^{\alpha/(1+\alpha)}}\, dm . \\
\end{align*}
Applying H\"older's inequality gives
\begin{align*}
1 \leq \left(\int_D \widetilde{\rho}^2dm\right)^{1/2} \left( \int_D 1 + \frac{1}{((1+\alpha)u)^{2\alpha/(1+\alpha)}}\left(\frac{t(u)^{-1+\alpha}}{\log^2 t(u)} - \frac{\alpha\, t(u)^{-1+\alpha}}{\log t(u)}\right)^2\, dm \right)^{1/2}.
\end{align*}
To evaluate the rightmost integral, we pull back to the $(a,t)$-plane to get
\begin{align*}
\int_D \left(1 + \frac{1}{t^{2\alpha}}\left(\frac{t^{-1+\alpha}}{\log^2 t} - \frac{\alpha\, t^{-1+\alpha}}{\log t} \right)^2 \right) t^\alpha\, dm & = \int_0^1\int_0^{1/2} t^\alpha + \frac{1}{t^\alpha}\left(\frac{t^{-1+\alpha}}{\log^2 t} - \frac{\alpha\, t^{-1+\alpha}}{\log t} \right)^2\, dt\,da \\
& = \int_0^{1/2} t^\alpha + \frac{1}{t^\alpha}\left(\frac{t^{-1+\alpha}}{\log^2 t} - \frac{\alpha\, t^{-1+\alpha}}{\log t} \right)^2\, dt . 
\end{align*}
The convergence of this integral follows from that of $\int_0^{1/2} -t^{-2+\alpha} (\log t)^{-2}\,dt$, for $\alpha \geq 1$. This gives a positive lower bound for $\int_D \widetilde{\rho}^2\,  dm$ in the case that $\alpha \geq 1$.

However, we show that the curves $\gamma_a(t) = (t, \frac{-t^\alpha}{\log t} + a)$ are not rectifiable for $\alpha > 0$. We compute
\begin{align*} 
\int_{\gamma_a} ds_\alpha & = \int_0^{1/2} \sqrt{1 + \frac{1}{t^{2\alpha}}\left(\frac{t^{-1+\alpha}}{\log^2 t} - \frac{\alpha\, t^{-1+\alpha}}{\log t}\right)^2}\, dt \\ & \geq \int_0^{1/2} \frac{|1-\alpha \log t|}{t \log^2 t} \, dt = \infty,
\end{align*}
where divergence follows from the divergence of $\int_0^{1/2} (-t \log t)^{-1}\,dt$, for $\alpha > 0$. 
\end{proof}

Based on the same example we also obtain the following result, which is essentially a corollary of the previous proof. This highlights some further differences with the case of Euclidean quasiconformal mappings and underscores the choice of geometric definition used in Theorem \ref{thm:main_theorem}.

\begin{cor} \label{cor:failure}
Let $f: \mathbb{G}_\alpha^2 \rightarrow \mathbb{G}_\alpha^2$ ($\alpha \geq 1$) be a quasiconformal homeomorphism (in any of the senses of Theorem \ref{thm:main_theorem}). 
\begin{itemize}
\item[(1)] The modulus of a curve family may fail to be quasi-preserved by $f$. Of necessity, such a curve family does not correspond to a ring domain.  
\item[(2)] (failure of absolute continuity on lines (ACL) property) The map $f$ may fail to be absolutely continuous on almost every closed, compact curve. 
\item[(3)] (failure of Lusin condition $N$) The map $f$ may map a set of zero measure (here, Hausdorff 2-measure) onto a set of positive measure. 
\end{itemize}
These statements also hold in the case of a quasiconformal homeomorphism $f: \mathbb{R}^2 \rightarrow \mathbb{G}_\alpha^2$. 
\end{cor}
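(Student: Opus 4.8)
The plan is to produce a single quasiconformal self-map $f$ of $\mathbb{G}_\alpha^2$ out of the curve family already built in the proof of Theorem \ref{thm:nonrectifiable_family}, and to read off all three failures from it; the case $f\colon\mathbb{R}^2\to\mathbb{G}_\alpha^2$ will then follow by using $f=\varphi^{-1}$ directly. Let $R(u,v)=(-v,u)$ be the Euclidean rotation by $\pi/2$ and set $f=\varphi^{-1}\circ R\circ\varphi\colon\mathbb{G}_\alpha^2\to\mathbb{G}_\alpha^2$. Since $R$ is an isometry and $\varphi,\varphi^{-1}$ are quasisymmetric, $f$ is quasisymmetric and hence quasiconformal in every sense of Theorem \ref{thm:main_theorem}. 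The essential feature is that $R$ interchanges the coordinate axes, so $f$ does not preserve the singular line: it carries the horizontal axis $\{y=0\}$ onto $Y$. I keep the notation $\widetilde\gamma_a$ and $\gamma_a=\varphi^{-1}\circ\widetilde\gamma_a$ for the example curves, recalling that $\Mod\widetilde\Gamma>0$ while each $\gamma_a$ is nonrectifiable, though locally rectifiable on $(0,1/2]$.

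For (3) I would take $A=\varphi^{-1}([0,1]\times\{0\})$, a horizontal arc lying in the Riemannian part of $\mathbb{G}_\alpha^2$ (meeting $Y$ only at the origin); being rectifiable it has $\mathcal{H}_\alpha^2(A)=0$. Since $R([0,1]\times\{0\})=\{0\}\times[0,1]$, one computes $f(A)=\varphi^{-1}(\{0\}\times[0,1])=\{0\}\times[0,1]\subset Y$, and by the identification $\mathcal{H}_\alpha^2\mres Y=C\,\mathcal{H}_E^{2/(1+\alpha)}$ this image has positive $\mathcal{H}_\alpha^2$-measure whenever $\alpha\geq1$, since then $2/(1+\alpha)\leq1$. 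Thus $f$ violates Lusin's condition $N$. For (2), consider $\beta_a=\varphi^{-1}\circ R^{-1}\circ\widetilde\gamma_a\colon[0,1/2]\to\mathbb{G}_\alpha^2$ for $a\in[1/2,1]$, where $\widetilde\gamma_a(0)=(0,a)$. Because $R^{-1}\widetilde\gamma_a$ has first coordinate $-t^\alpha/\log t+a\geq a\geq1/2$, it stays uniformly away from $\widetilde Y$, where $\varphi^{-1}$ is bi-Lipschitz; hence each $\beta_a$ is a rectifiable compact curve and $\{\beta_a\}$ has positive modulus, being a bi-Lipschitz image of a rotation of a positive-modulus subfamily of $\widetilde\Gamma$. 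On the other hand $f(\beta_a)=\varphi^{-1}(\widetilde\gamma_a)=\gamma_a$ is nonrectifiable, so $f$ cannot be absolutely continuous on any $\beta_a$. This exhibits a positive-modulus family of compact curves along each of which $f$ fails to be absolutely continuous, in contrast with the Euclidean theory.

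For (1) I would reuse $\Gamma'=\{\beta_a:a\in[1/2,1]\}$, which is rectifiable with $\Mod\Gamma'>0$. But $f(\beta_a)=\gamma_a$ has its endpoint $\gamma_a(0)=(0,a)$ on $Y$, and since $\int_0^\delta ds_\alpha=\infty$ along $\gamma_a$ for every $\delta>0$, each $\gamma_a$ fails to be even locally rectifiable on $[0,1/2]$. A family consisting entirely of curves that are nowhere locally rectifiable imposes no admissibility constraint, so $\Mod f\Gamma'=0$, while $\Mod\Gamma'>0$; hence the modulus of $\Gamma'$ is not quasi-preserved by $f$. Because Proposition \ref{prop:varphi_gqc} and the ring definition guarantee quasi-preservation for every family of the form $\Gamma(E,F)$, the family $\Gamma'$ necessarily is not of that form, which is the asserted consequence.

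The routine verifications to fill in are that the restricted subfamily (over $a\in[1/2,1]$) retains positive Euclidean modulus—obtained by rerunning the H\"older estimate of Theorem \ref{thm:nonrectifiable_family} with the $a$-integral taken over $[1/2,1]$—and that $\varphi^{-1}$ is genuinely bi-Lipschitz on the region bounded away from $\widetilde Y$, so that modulus transfers up to a constant. The conceptual crux, and the point I expect to need the most care, is the trichotomy rectifiable / locally rectifiable / nowhere locally rectifiable: all three failures hinge on the fact that the modulus only detects \emph{locally} rectifiable curves, so that the single geometric operation of pushing an endpoint onto the snowflaked singular line $Y$—where Proposition \ref{prop:hausdorff_intersection} forbids finite length—collapses a positive-modulus family to modulus zero. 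Finally, for a map $\mathbb{R}^2\to\mathbb{G}_\alpha^2$ one repeats the three arguments with $f=\varphi^{-1}$ in place of $\varphi^{-1}\circ R\circ\varphi$, using $\widetilde\Gamma$ itself in (1) and (2) and the axis $\widetilde Y$ in (3); no rotation is needed there because $\varphi^{-1}$ already destroys finiteness of length along curves meeting $Y$.
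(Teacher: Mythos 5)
Your proposal is correct and takes essentially the same approach as the paper: both construct the quasiconformal map by conjugating a Euclidean isometry by the canonical quasisymmetry $\varphi$ (you use a rotation, the paper a horizontal translation), and both read off all three failures from the nonrectifiable positive-modulus family of Theorem \ref{thm:nonrectifiable_family} together with the positive $\mathcal{H}_\alpha^2$-measure of the snowflaked line $Y$. The only cosmetic difference is that you arrange for $f$ itself, rather than $f^{-1}$, to exhibit the failures, which costs you the (routine, and correctly flagged) verification that the rotated subfamily $\{\beta_a\}_{a\in[1/2,1]}$, lying in a compact region away from the singular line where the change-of-variables identity applies, retains positive modulus.
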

\begin{proof}
Let $\Gamma'$ be the family of curves in $\mathbb{G}_\alpha^2$ obtained by adding the left endpoint 0 into the domain of each curve in $\Gamma$ in the example of Theorem \ref{thm:nonrectifiable_family}. Then $\Gamma'$ consists only of non-locally rectifiable curves. The definition of modulus implies that any curve family with no non-locally rectifiable curves has modulus zero. However, the family $\varphi \Gamma'$ has the same modulus as the family $\widetilde{\Gamma}$ in the proof of Theorem \ref{thm:nonrectifiable_family}, which is positive. Hence it is not true that the map $\varphi$ quasi-preserves the modulus of all curve families not associated to some disjoint continua $E,F$. 

Define $\widetilde{f}: \mathbb{R}^2 \rightarrow \mathbb{R}^2$ by $\widetilde{f}(u,v) = ( u+1,v)$, and $f: \mathbb{G}_\alpha^2 \rightarrow \mathbb{G}_\alpha^2$ by $f = \varphi^{-1} \circ \widetilde{f} \circ \varphi$. Since $\varphi \Gamma'$ and $f\Gamma'$ have the same modulus, we obtain claim (1) of the theorem. 

To verify claim (2), consider again the curve family $\Gamma'$ and the function $f$ defined in the previous paragraph. The family $f \Gamma'$ has positive modulus and contains only rectifiable curves, but every curve in $\Gamma'$ is nonrectifiable. This shows that $f^{-1}$ is not absolutely continuous on almost every closed rectifiable curve. 

To verify claim (3) observe that $f(Y)$ is a locally rectifable line in $\mathbb{G}_\alpha^2$, whose image under $f^{-1}$, namely $Y$ itself, has positive 2-measure whenever $\alpha \geq 1$.

For the final statement, consider the mapping $\varphi^{-1} \circ \widetilde{f}: \mathbb{R}^2 \rightarrow \mathbb{G}_\alpha^2$. In contrast, observe that claims (2) and (3) are false for the class of quasiconformal homeomorphisms from $\mathbb{G}_\alpha^2$ to $\mathbb{R}^2$. 
\end{proof}

\noindent {\bf Acknowledgments.} D. Jung was supported by U.S. Department of Education GAANN fellowship P200A150319. The authors thank Jeremy Tyson and Colleen Ackermann for comments on a draft of this paper. They also thank the referee for useful feedback.

\bibliographystyle{abbrv}
\bibliography{biblio}

\end{document}